\newtheorem{thm}{Theorem}[section]
\newtheorem{cor}[thm]{Corollary}
\newtheorem{lem}[thm]{Lemma}
\newtheorem{prop}[thm]{Proposition}
\newtheorem{theorem}{Theorem}
\theoremstyle{definition}
\newtheorem{defn}[thm]{Definition}
\newtheorem{example}{Example}
\newtheorem{rem}[thm]{Remark}
\newtheorem*{ack}{Acknowledgements}
\numberwithin{equation}{section}
\newcommand{\scal}[1]{\langle #1 \rangle}
\DeclareMathOperator{\Sym}{Sym}
\newcommand{\RR}{\mathbb{R}}
\newcommand{\CC}{\mathbb{C}}
\newcommand{\FF}{\mathbb{F}}
\DeclareMathOperator{\codim}{codim}
\DeclareMathOperator{\Tub}{Tub}
\newcommand{\sphere}{\mathrm{\mathbb{S}}}
\newcommand{\ZZ}{\mathbb{Z}}
\newcommand{\indecomposable}{indecomposable}
\newcommand{\decomposable}{decomposable}
\newcommand{\fol}{\mathcal{F}}
\newcommand{\SO}{\mathsf{SO}}
\newcommand{\Spin}{\mathrm{Spin}}
\newcommand{\On}{\mathrm{O}}
\newcommand{\SU}{\mathsf{SU}}
\newcommand{\U}{\mathsf{U}}
\newcommand{\Sp}{\mathsf{Sp}}
\DeclareMathOperator{\End}{End}
\newcommand{\Fix}{\mathrm{Fix}}
\newcommand{\GL}{\mathrm{GL}}
\newcommand{\ul}{\underline}
\newcommand{\lra}{\longrightarrow}
\newcommand{\lmt}{\longmapsto}
\newcommand{\ra}{\rightarrow}
\newcommand{\In}{\subseteq}
\newcommand{\tr}{\textrm{tr}}
\newcommand{\DD}{\mathbb{D}}
\newcommand{\HH}{\mathbb{H}}
\newcommand{\PP}{\Bbb{P}}
\newcommand{\mmod}{\textrm{mod\,}}
\begin{document}

%-----------------------------------------------------------------------------
% BEGIN FRONT MATTER ----------------------------------------------------------
%-----------------------------------------------------------------------------

% TITLE

\title[Clifford algebras and new foliations]{Clifford algebras and new singular Riemannian foliations in spheres}

% AUTHOR

\author[M. Radeschi]{Marco Radeschi}
\address{Mathematisches Institut, WWU M\"unster, Germany.}
%\curraddr{IMPA}
\email{mrade\_02@uni-muenster.de}
\thanks{}

\dedicatory{Dedicated to the memory of Sergio Console}

% DATE
\date{\today}

% MATH SUBJECT CLASSIFICATION AND KEYWORDS

\subjclass[2010]{53C12, 57R30}
\keywords{Clifford algebra, homogeneous foliation, singular Riemannian foliation, sphere, FKM}

% ABSTRACT

\begin{abstract}
Using representations of Clifford algebras we construct \indecomposable{} singular Riemannian foliations on round spheres, most of which are non-homogeneous. This generalises the construction of non-homogeneous isoparametric hypersurfaces due to by Ferus, Karcher and M\"unzner.
 \end{abstract}

\maketitle

%-----------------------------------------------------------------------------
% END FRONT MATTER ----------------------------------------------------------
%-----------------------------------------------------------------------------

%-----------------------------------------------------------------------------
%	MAIN MATTER	-----------------------------------------------------------------------------
%-----------------------------------------------------------------------------

%---------------------------------------------
% SECTION: INTRODUCTION
%---------------------------------------------

A singular Riemannian foliation on a Riemannian manifold $M$ is, roughly speaking, a partition of $M$ into connected complete submanifold, not necessarily of the same dimension, that locally stay at a constant distance from each other. Singular Riemannian foliations on round spheres provide local models of general singular Riemannian foliations around a point.

An example of singular Riemannian foliation on round spheres is given by the decomposition into the orbits of an isometric group action, and such a foliation is called \emph{homogeneous}.

A different family of singular Riemannian foliations on spheres is induced by isoparametric hypersurfaces. A hypersurfaces of $\sphere^n$ is called isoparametric if it has constant principal curvatures. Isoparametric hypersurfaces were first studied by Cartan who classified those with $g\leq 3$ distinct principal curvatures, and a lot of progress has been made (cf. for example the surveys \cite{Cec, Tho}), even though the complete classification is still an important open problem. Every isoparametric hypersurface partitions the sphere into parallel hypersurfaces, which are isoparametric as well, and this partition is a special example of a singular Riemannian foliation. For a long time all the known codimension 1 singular Riemannian foliations from isoparametric hypersurfaces appeared to be orbits of some isometric group action on $\sphere^n$, so much so that Cartan asked \cite{Car} whether every isoparametric hypersurface arised in this way. The question was answered in the negative by Ozeki and Takeuchi \cite{OTI, OTII}, who found infinite families of non homogeneous isoparametric foliations with $4$ distinct principal curvatures defined in terms of representations of Clifford algebras. These examples were then extended to a larger class of nonhomogeneous isoparametric foliations by Ferus, Karcher and M\"unzner \cite{FKM}, again using Clifford algebras. We call these axemples the \emph{FKM examples}. It has been proven that every foliation in round spheres by isoparametric hypersurfaces with 4 principal curvatures is either homogeneous or of FKM type, except possibly for a finite number of isolated cases (cf. \cite{Imm}). 

As in the isoparametric case, classifying non-homogeneous singular Riemannian foliations seems a very complex problem. A trivial way to obtain new foliations from old ones is called \emph{spherical join}. Given singular Riemannian foliations $(\sphere^{n_i},\fol_i)$, $i=1,2$, the spherical join gives a new foliation $(\sphere^{n_1+n_2+1},\fol_1\star \fol_2)$. Any foliation that cannot be written as a spherical join is called \emph{\indecomposable}, and every foliation can be written in an essentially unique way as a spherical join of \indecomposable{} ones. Because of this, our main interest lies in finding non-homogeneous, \indecomposable{} singular Riemannian foliations.

The only known \indecomposable{} non-homogeneous singular Riemannian foliation, other than the FKM examples mentioned above, is the foliation in $\sphere^{15}$ given by the fibers of the Hopf fibration $\sphere^{15}\to \sphere^8$. Recently A. Lytchak and B. Wilking proved, using a previous result of Wilking \cite{Wil} and Grove-Gromoll \cite{GG}, that this is the only non-homogeneous \emph{regular} foliation, i.e., with leaves of the same dimension \cite{LW}.

In this paper, as in \cite{FKM}, we use Clifford systems to produce a large class of \indecomposable{}, non-homogeneous singular Riemannian foliations of arbitrary codimension, which in particular include all the previously known examples. Before we state the result, recall that a Clifford system can be thought of as a family $C=(P_0, \ldots P_m)$ of symmetric matrices in $(\RR^{2l},\scal{\,,\,})$ such that $P_i^2=Id$ for all $i=0,\ldots m$ and $P_iP_j=-P_jP_i$ for $i\neq j$. We define the map
\begin{align*}
\pi_C: \sphere^{2l-1}&\lra \RR^{m+1}\\
x&\lmt \Big(\scal{P_0x,x},\ldots \scal{P_mx,x}\Big).
\end{align*}

%THEOREM A

\begin{theorem}\label{T:CliffordFoliations}
Let $C=(P_0,\ldots P_m)$ be a Clifford system on $\RR^{2l}$. Then the image of $\pi_C$ is contained in the unit disk $\DD_C$ around the origin in $\RR^{m+1}$, and the following hold:
\begin{enumerate}
\item The preimages of $\pi_C$ are connected if $l\neq m+1$ and in this case they define a singular Riemannian foliation $(\sphere^{2l-1},\fol_C)$ whose leaf space is either the $m$-sphere $\sphere_C=\partial \DD_C$ (if $l=m$) or the disk $\DD_C$ (if $l>m+1$). In either case the induced metric on the quotient is a round metric of constant sectional curvature $4$.
 \item The foliation $(\sphere^{2l-1},\fol_C)$ is homogeneous if and only if $m=1,2$ or $m=4$ and $P_0\cdot P_1\cdot P_2\cdot P_3\cdot P_4=\pm Id$, in which cases it is spanned by the orbits of the diagonal action of $\SO(k)$ on $\RR^k\times \RR^k$ ($m=1$), $\SU(k)$ on $\CC^k\times \CC^k$ ($m=2$) or $\Sp(k)$ on $\HH^k\times \HH^k$ ($m$=4).
\end{enumerate}
\end{theorem}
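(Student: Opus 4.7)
For part (1), the containment $\pi_C(\sphere^{2l-1})\subseteq \DD_C$ is a Bessel inequality: the Clifford relations give $\scal{P_ix, P_jx} = \scal{P_jP_ix, x} = -\scal{P_iP_jx, x} = -\scal{P_jx, P_ix}$, so $\{P_0x,\ldots,P_mx\}$ is orthonormal for every unit $x$, whence $\sum_i f_i(x)^2 \leq 1$ with $f_i(x) = \scal{P_ix, x}$. The foliation $\fol_C$ is then the decomposition by common level sets of these $m+1$ functions on $\sphere^{2l-1}$; since $1 - 2|\pi_C|^2$ restricts to the classical degree-$4$ FKM isoparametric polynomial, $\fol_C$ refines the FKM isoparametric foliation, with focal submanifolds $M_+ = \pi_C^{-1}(0)$ and $M_- = \pi_C^{-1}(\partial\DD_C)$. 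To check the SRF axioms, I would compute the spherical gradients $\nabla^{\sphere} f_i = 2u_i$ with $u_i = P_ix - f_i(x)x$, verify the key identity $\scal{u_i, u_j} = \delta_{ij} - f_if_j$ (so the $u_i$ span a horizontal distribution of the expected rank wherever the fiber is generic), and use the Clifford relations to show horizontal flows preserve the joint level sets of the $f_i$.

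Connectedness of fibers is governed by Clifford module theory: interior fibers are orbits of the centraliser of the Clifford subalgebra generated by the $P_i$ inside $\SO(2l)$, and are connected whenever the multiplicity of each irreducible $\mathrm{Cl}_m$-module in $\RR^{2l}$ is at least $2$; the borderline $l = m+1$ is precisely when this multiplicity drops to $1$ and the central involution $P_0\cdots P_m$ produces a $\ZZ_2$-disconnection. Whether the quotient is $\sphere_C$ ($l=m$) or the full disc $\DD_C$ ($l>m+1$) is determined by whether $|\pi_C|$ attains the value $0$, equivalent to a linear-algebraic condition on the dimension of $\RR^{2l}$. For the quotient metric, decomposing a horizontal vector at $x$ into radial and angular parts relative to $\pi_C(x)\in\DD_C$ and using $\scal{u_i,u_j} = \delta_{ij} - f_if_j$, the angular components scale under $d\pi_C$ by a constant factor $2$ while the radial component scales by $2\sqrt{1-r^2}$. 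In polar coordinates $(r,\omega)$ on $\DD_C$ this yields the quotient metric $\frac{dr^2}{4(1-r^2)} + \frac{r^2}{4}g_{\sphere^{m-1}}$; the substitution $r = \sin(2s)$ converts this to $ds^2 + \frac{1}{4}\sin^2(2s)\,g_{\sphere^{m-1}}$, the metric of a hemisphere of radius $1/2$ with constant sectional curvature $4$.

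For part (2), the affirmative direction uses Bott periodicity: for $m\in\{1,2\}$, or $m=4$ with $P_0\cdots P_4 = \pm\mathrm{Id}$ (which forces a single irreducible $\mathrm{Cl}_4$-module type to appear in $\RR^{2l}$), there is a canonical identification $\RR^{2l}\cong\KK^l\oplus\KK^l$ with $\KK\in\{\RR,\CC,\HH\}$ such that each $f_i(x)$ for $x=(a,b)$ encodes either $|a|^2-|b|^2$ or a component of $\scal{a,b}_{\KK}$; a direct stabiliser count shows that the diagonal orbits of $\SO(l)$, $\SU(l)$, $\Sp(l)$ coincide with the fibers of $\pi_C$. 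The main obstacle is the converse: if $\fol_C$ were homogeneous under a compact Lie group $G\subset\On(2l)$, then $G_0$ would preserve each $f_i$, so its Lie algebra would embed in the skew-symmetric part of the centraliser of the Clifford subalgebra in $\End(\RR^{2l})$. Using the Bott-periodic classification of real Clifford modules, this centraliser is of the form $\mathfrak{so}(k)$, $\mathfrak{u}(k)$, $\mathfrak{sp}(k)$, or a direct sum of two such, depending on $m \pmod{8}$. Comparing its maximum orbit dimension on $\sphere^{2l-1}$ with the fiber dimension $2l-m-2$, equality forces $m\in\{1,2,4\}$, with the $m=4$ case requiring the product condition to kill the second summand of $\mathfrak{sp}(k)\oplus\mathfrak{sp}(k')$ (which would otherwise arise when both inequivalent $\mathrm{Cl}_4$-modules appear in $\RR^{2l}$). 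All other cases exhibit a strict dimensional gap, obstructing homogeneity.
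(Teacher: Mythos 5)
Your proof of containment in the disk via orthonormality of $\{P_0x,\ldots,P_mx\}$ and Bessel's inequality is correct and slightly slicker than the paper's Cauchy--Schwarz argument; the gradient identity $\scal{u_i,u_j}=\delta_{ij}-f_if_j$ and the resulting submersion-metric computation are also correct (modulo a typo: the angular factor should be $g_{\sphere^m}$, since $\DD_C\subset\RR^{m+1}$, not $g_{\sphere^{m-1}}$). The coordinate change $r=\sin(2s)$ then indeed produces $ds^2+\tfrac14\sin^2(2s)g_{\sphere^m}$, the hemisphere of curvature $4$.

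However, there is a genuine gap in the connectedness argument. You assert that ``interior fibers are orbits of the centraliser of the Clifford subalgebra generated by the $P_i$ inside $\SO(2l)$.'' This is false, and in fact would contradict part (2) of the very theorem you are proving: if every fiber of $\pi_C$ were a group orbit, $\fol_C$ would always be homogeneous. Concretely, for $C=C_{9,1}$ on $\RR^{32}$ the Clifford algebra generated by $P_0,\ldots,P_9$ is all of $M_{32}(\RR)$, so its centraliser in $\SO(32)$ is $\{\pm\mathrm{Id}\}$; yet the fiber $M_+=\pi_C^{-1}(0)$ has dimension $2l-m-2=21$. The same failure occurs for $m=5,6,7,8,9,\ldots$. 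What \emph{is} true, and what you correctly exploit in part (2), is that the centraliser is the largest group that \emph{could} act with orbits equal to fibers; but one cannot use its orbits to establish connectedness in the generic non-homogeneous case. Connectedness needs a direct argument: the paper exhibits $M_+$ as the continuous image of a sphere bundle over $E_+^1(P_0)$ with fiber $\sphere^{l-m-1}$, which is connected exactly when $l>m+1$. Your ``multiplicity $\geq 2$'' criterion also misstates the borderline: for $m=1$ the exceptional case $l=m+1=2$ has $k=l/\delta(1)=2$, not $1$, so multiplicity does not detect disconnectedness uniformly.

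For part (2), your dimension-count strategy via the commutant of the Clifford algebra is a genuinely different route from the paper, which instead reduces to the FKM homogeneity classification via the action of $\mathrm{Spin}(m+1)$ and then handles $C_{9,1}$ by an isotropy argument. Your observation that any group whose orbits are the fibers must commute with each $P_i$ (by polarizing $\scal{P_iAx,Ax}=\scal{P_ix,x}$) is correct and is a clean starting point; the remaining work is to carefully compare maximal orbit dimensions of $\mathfrak{so}(k)$, $\mathfrak{u}(k)$, $\mathfrak{sp}(k)$ (and their direct sums when $m\equiv 0 \pmod 4$) acting diagonally with the fiber dimension $2l-m-2$, and then still verify in the critical cases $m\in\{1,2,4\}$ that the orbits actually equal the fibers (dimensional equality alone is not sufficient). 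If carried out carefully, this would bypass the FKM classification, which is a real simplification. Note also a notational slip: the acting groups should be $\SO(k),\SU(k),\Sp(k)$ on $\KK^k\oplus\KK^k$, not $\SO(l),\SU(l),\Sp(l)$ on $\KK^l\oplus\KK^l$.
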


When the leaf space is a sphere one recovers the Hopf fibrations $\pi_C:\sphere^{2m-1}\to \sphere^m$, $m=2,4,8$. When the leaf space is $\DD_C$ with the round metric (also \emph{hemisphere metric}) the $\pi_C$-preimages in $\sphere^{2l-1}$ of the concentric spheres in $\DD_C$ give rise to the FKM family associated to the Clifford system $C$.
\\

A singular Riemannian foliation $\fol_0$ on the $m$-sphere $\sphere_C=\partial \DD_C\In \RR^{m+1}$ extends by homotheties to a singular Riemannian foliation $\fol_0^h$ on $\DD_C$ (with the hemisphere metric) and the $\pi_C$-preimages of the leaves of $\fol_0^h$ define a new foliation $\fol_0\circ\fol_C$. This is a special case of a more general construction of Lytchak \cite[Sect. 2.5]{Lyt}.

%THEOREM B
\begin{theorem}\label{T:ComposedFoliations}
Let $C$ a Clifford system on $\RR^{2l}$ and let $(\sphere^{2l-1},\fol_C)$ be the associated Clifford foliation.
\begin{enumerate}
\item If $\fol_0$ is any singular Riemannian foliation on $\sphere_C$, then the foliation $(\sphere^{2l-1},\fol_0\circ\fol_C)$ is a singular Riemannian foliation as well.
\item Let $C_{8,1}$ and $C_{9,1}$ denote, respectively, the unique Clifford systems $(P_0,\ldots P_8)$ on $\RR^{16}$ and $(P_0,\ldots P_9)$ on $\RR^{32}$. If $C\neq C_{8,1},\,C_{9,1}$ then $(\sphere^{2l-1},\fol_0\circ \fol_C)$ is homogeneous if and only if both $\fol_0$ and $\fol_C$ are homogeneous. If $C=C_{9,1}$ and $(\sphere^{31},\fol_0\circ \fol_C)$ is homogeneous, then $\fol_0$ is homogeneous.
\end{enumerate}
\end{theorem}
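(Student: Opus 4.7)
For part (1), Theorem A identifies the map $\pi_C\colon\sphere^{2l-1}\to\DD_C$ as the leaf quotient of $\fol_C$; in particular it is a submetry onto the hemisphere-metric disk. The radial extension $\fol_0^h$ of $\fol_0$ from $\sphere_C=\partial\DD_C$ to $\DD_C$ is a standard construction producing a SRF on $(\DD_C,g_{\mathrm{hemi}})$, since the hemisphere metric is a cone metric over $\sphere_C$ and the cone of a SRF is a SRF. The pull-back $\fol_0\circ\fol_C=\pi_C^{-1}(\fol_0^h)$ is then a SRF on $\sphere^{2l-1}$ by Lytchak's general pull-back construction \cite[Sect.~2.5]{Lyt}, which asserts precisely that the pre-image of a SRF under a submetry with connected fibers (a condition guaranteed by Theorem A since $l\neq m+1$) is again a SRF.

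For the easy direction of part (2), assume both $\fol_C$ and $\fol_0$ are homogeneous. By Theorem A, $m\in\{1,2,4\}$ and $\fol_C$ arises from the diagonal action of $H\in\{\SO(k),\SU(k),\Sp(k)\}$. Any isometric action $G_0\subset \Iso(\sphere_C)$ whose orbits are the leaves of $\fol_0$ lifts to an action $\wt G_0$ on $\sphere^{2l-1}$ commuting with $H$, using that the normalizer of $H$ in $\Iso(\sphere^{2l-1})$ projects onto $\Iso(\DD_C)$ in these three cases. The combined action $H\cdot\wt G_0$ then has orbits equal to $\pi_C^{-1}(G_0\text{-orbits})$, i.e.\ the leaves of $\fol_0\circ\fol_C$.

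The hard direction is the converse. Suppose $\fol_0\circ\fol_C$ is the orbit foliation of $G\subset\Iso(\sphere^{2l-1})$, with $G$ extended linearly to $\On(2l)\subset\GL(\RR^{2l})$. The core of the argument is to prove that for $C\notin\{C_{8,1},C_{9,1}\}$, the group $G$ already preserves the finer foliation $\fol_C$. The space of $G$-invariant quadratic polynomials on $\RR^{2l}$ is $G$-invariant and contains the Clifford quadrics $x\mapsto\scal{P_i x,x}$, whose common level sets are the $\fol_C$-leaves. Using the Clifford relations $P_i^2=\Id$ and $P_iP_j=-P_jP_i$, one should argue that the $(m+1)$-dimensional Clifford subspace $W=\mathrm{span}\{\scal{P_i x,x}\}$ is characterized inside the $G$-invariant quadratic polynomials by an intrinsic bilinear structure, and hence is itself $G$-invariant; this forces $G$ to preserve $\fol_C$. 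The exceptional system $C_{8,1}$ (the octonionic Hopf case) admits triality-type symmetries mixing different Clifford subspaces, which destroys the rigidity outright. For $C_{9,1}$ the rigidity argument still identifies $W$ as $G$-invariant---so $G$ descends to $\bar G\subset\Iso(\DD_C)$ whose orbits are the $\fol_0^h$-leaves, yielding homogeneity of $\fol_0$---but the kernel $\ker(G\to\bar G)$ is too small to act transitively on each $\pi_C$-fiber, so $\fol_C$ itself need not be homogeneous.

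The main obstacle I anticipate is the rigidity step above: making precise how the Clifford subspace $W$ is encoded intrinsically in the geometry of $\fol_0\circ\fol_C$ (most likely through the span of second fundamental forms of its leaves) and isolating why this characterization breaks exactly at $C_{8,1}$ and, in a weaker form, at $C_{9,1}$. I would attack it by combining the FKM classification of Clifford systems with an infinitesimal analysis at a principal leaf of $\fol_0\circ\fol_C$.
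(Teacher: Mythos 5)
Your treatment of part (1) glosses over the only nontrivial point. The composed map is a submetry, so $\fol_0\circ\fol_C$ is automatically a \emph{transnormal system}; the issue is producing smooth spanning vector fields, i.e.\ showing it is a \emph{singular foliation}. Away from $M_-$ the map $\pi_C$ is a genuine Riemannian submersion onto $\mathring\DD_C$ and the usual pull-back argument applies, but at $M_-$ the differential of $\pi_C$ degenerates (the fibers collapse from $(l-m-1)$-spheres to great $(l-1)$-spheres), so there is no Riemannian-submersion pull-back and no general ``submetry with connected fibers pulls back SRFs'' result to cite. The paper resolves exactly this by an explicit local trivialization of a tube around $M_-$, showing that $\fol_0\circ\fol_C$ there is diffeomorphic to the product $\fol|_U\times\fol_{\DD^r}$. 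Your proposal does not address this degenerate locus, which is where all the work is.

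For the easy direction of (2) your idea is essentially the paper's: one lifts the $\Iso(\sphere_C)$-action through the cover $\Spin(m+1)\to\SO(m+1)$ and composes with the representation $\eta\colon\Spin(m+1)\to\SO(2l)$ built from products $PQ$, $P,Q\in\sphere_C$. Your phrase ``commuting with $H$'' is not quite right (the $\Spin$-factor normalizes rather than commutes), but the mechanism is the same.

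For the hard direction your proposal diverges completely and, as you acknowledge, is not a proof. The paper's argument is much more elementary and avoids invariant theory altogether. Observe that $M_+=\pi_C^{-1}(0)$ is a leaf of both $\fol_C$ and $\fol_0\circ\fol_C$, hence a $G$-orbit. For $x\in M_+$, the map $\pi_C\circ\exp_x^{\perp}$ identifies the unit normal sphere $\nu_x^1M_+$ with $\sphere^m=\sphere_C$, and under this identification the slice representation of $G_x$ on $\nu_x^1M_+$ has orbits exactly the leaves of $\fol_0$; this already shows $\fol_0$ is homogeneous (including for $C_{9,1}$). Next, enlarging $G$ by $\eta(\Spin(m+1))$ produces a group $G'$ whose orbits are the FKM isoparametric family $\fol_C'$, so $\fol_C'$ is homogeneous. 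Finally one feeds this into the FKM table of homogeneous isoparametric families and Proposition~\ref{P:Clifford-homog}: for $C\neq C_{9,1}$ homogeneity of $\fol_C'$ forces $(m,k)\in\{(1,k),(2,k),(4,k)\ \text{with}\ P_0\cdots P_4=\pm\Id\}$, and in these cases $\fol_C$ itself is homogeneous; the $l=m$ case with $C\neq C_{8,1}$ reduces to $\sphere^2,\sphere^4$ where every SRF is homogeneous. Your invariant-theoretic rigidity claim (that $G$ must preserve the Clifford quadric subspace $W$) is neither established nor needed; the appeal to ``triality'' at $C_{8,1}$ is speculation, whereas the actual reason $C_{8,1}$ and $C_{9,1}$ are special is simply that they are the entries of the FKM homogeneity table for which $\fol_C'$ can be homogeneous while $\fol_C$ is not (the octonionic Hopf fibration and the $(9,1)$ family).
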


Statement (2) of Theorem \ref{T:ComposedFoliations} fails in the case of $C=C_{8,1}$ or $C=C_{9,1}$, as there are examples of (homogeneous) foliations $(\sphere^m,\fol_0)$ such that $\fol_0\circ\fol_C$ is homogeneous, while $C$ itself is not. It would be interesting to have a complete characterization of the homogeneous foliations of type $\fol_0\circ \fol_C$ in these last two cases.

We call the foliations $\fol_C$ described above \emph{Clifford foliations}, and the foliations $\fol_0\circ\fol_C$ \emph{composed foliations}. Notice that in a Clifford foliation the set of singular leaves is a connected, smooth, non totally geodesic submanifold of $\sphere^{2l-1}$. This can never happen for \decomposable{} foliations, and therefore every Clifford foliation is \indecomposable.
\begin{example}
If $\fol_0$ is a trivial foliation whose leaves consist of points, $\fol_0\circ \fol_C=\fol_C$ and in particular every Clifford foliation is a composed foliation as well. If $\fol_0$ is the trivial foliation consisting of one leaf, $\fol_0\circ \fol_C$ is the codimension $1$ FKM examples corresponding to the Clifford system $C$. Since the foliation induced by the Hopf fibration $\sphere^{15}\to \sphere^8$ is of the form $\fol_C$, all previously known examples of \indecomposable, non-homogeneous foliations are of the form $\fol_0\circ \fol_C$, with $\fol_0$ trivial.
\end{example}
\begin{example}
Let $(\sphere^{15},\fol_C)$ be the Clifford foliation with quotient $\sphere^{8}\In \RR^9$. The group $\SO(3)\times \SO(3)$ acts on $\RR^9=\RR^3\otimes \RR^3$ via the tensor product representation, and the restriction of this action on the unit sphere induces a (homogeneous) foliation $(\sphere^8,\fol_0)$ whose quotient space is a spherical triangle of curvature $1$ with angles $\pi/3, \pi/3, \pi/2$. The composed foliation $\fol_0\circ \fol_C$ is thus a singular Riemannian foliation on $\sphere^{15}$ whose quotient is isometric to a spherical triangle of curvature $4$, with angles $\pi/3, \pi/3, \pi/2$. Such a quotient does not appear as a quotient of an isometric group action of cohomogeneity $2$ (see Straume classification \cite[Table II]{Str}) and therefore $\fol_0\circ \fol_C$ is non-homogeneous. Moreover, such a triangle does not admit submetries onto a segment and therefore $\fol_0\circ\fol_C$ is not contained in any codimension $1$ foliation. To our knowledge, this is the only known singular Riemannian foliation of codimension $2$ with this property. In the homogeneous case, by contrast, it is known that every action of cohomogeneity 2 on a round sphere is contained in a larger action of cohomogeneity 1 (see for example \cite[Theorem 1.1]{GL2}).
\end{example}
\begin{example}
Let $C$ be a Clifford system on $\RR^{2l}$, and $(\sphere_C,\fol_0)$ be a singular Riemannian foliation without $0$-dimensional leaves. Then the leaf space $\sphere_C/\fol_0$ has diameter $\leq {\pi\over 2}$, and the composed foliation $(\sphere^{2l-1},\fol_0\circ \fol_C)$ has quotient of diameter $\pi/4$, which in particular is strictly smaller than $\pi/2$. Such foliations are called \emph{irreducible}, because in the homogeneous setting the representations with quotient of diameter $<\pi/2$ are precisely the irreducible ones. Since \decomposable{} foliations have quotient with diameter $\geq\pi/2$, it follows in particular that these examples are \indecomposable.
\end{example}

Unlike the FKM examples, inequivalent Clifford system give rise to different Clifford foliations, see Proposition \ref{P:distinguish}. Moreover, Clifford foliations can be geometrically characterized as the only singular Riemannian foliations on spheres whose quotient is a sphere or a hemisphere of curvature 4. More precisely, let $\mathcal{G}$ the class of singular Riemannian foliations on a round sphere, whose quotient is a sphere or a hemisphere of curvature 4 and let $\mathcal{A}$ be the class of Clifford systems. Then the following holds.
%THEOREM C
\begin{theorem}\label{T:rigidity}
The assignment $C\mapsto \fol_C$ determines a bijection
\[
\mathcal{A}/\{\text{geometric equivalence}\}\stackrel{\simeq}{\lra} \mathcal{G}/\{\text{congruence}\}
\]
%Let $(\sphere^n,\fol)$ be a singular Riemannian foliation whose quotient is either a sphere or a hemisphere of constant curvature 4. Then $\fol$ is congruent to a Clifford foliation $\fol_C$, for some Clifford system $C$.
\end{theorem}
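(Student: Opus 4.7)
The plan is to establish well-definedness, injectivity, and surjectivity of $C\mapsto\fol_C$ modulo the stated equivalences. Well-definedness is immediate: a geometric equivalence $(g,O)\in\On(2l)\times\On(m+1)$ intertwining $C$ and $C'$ yields $\pi_{C'}(x)=O\cdot\pi_C(gx)$, so $\fol_{C'}$ is the $g$-image of $\fol_C$; injectivity is Proposition~\ref{P:distinguish}. The substantive task is surjectivity.

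Let $(\sphere^{2l-1},\fol)\in\mathcal{G}$ have quotient submetry $\pi:\sphere^{2l-1}\to Q$, where $Q\subseteq\RR^{m+1}$ is either the unit sphere (case $l=m$) or the unit disk (case $l>m+1$), endowed with the curvature-$4$ round, respectively hemisphere, metric. Set $f_i:=y_i\circ\pi$ for the Euclidean coordinates $y_0,\ldots,y_m$ of $\RR^{m+1}$. It suffices to realise each $f_i$ as the restriction to $\sphere^{2l-1}$ of a quadratic form $\scal{P_i x,x}$, with $(P_0,\ldots,P_m)$ a Clifford system; then $\pi_C=\pi$ and $\fol_C=\fol$.

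In the sphere case, $\fol$ is regular and $\pi$ is a Riemannian submersion between round spheres, which up to isometry must be one of the classical Hopf fibrations; in particular $\pi$ has minimal fibres, and the pullback formula yields $\Delta_{\sphere^{2l-1}}f_i=(\Delta_Q y_i)\circ\pi=4m\,f_i=4l\,f_i$, matching the Laplace eigenvalue of degree-$2$ spherical harmonics, so each $f_i$ extends uniquely to a harmonic quadratic form $\scal{P_i x,x}$. In the disk case, the basic function $F:=1-2|\pi|^2$ extends to a homogeneous degree-$4$ polynomial on $\RR^{2l}$ whose regular level sets form a codimension-$1$ isoparametric family with $4$ principal curvatures (the number $4$ being forced by the curvature-$4$ hemisphere quotient); by the Cartan--M\"unzner equations together with the FKM rigidity for these multiplicities, $F$ takes the form $|x|^4-2\sum_i\scal{P_i x,x}^2$, which extracts the symmetric matrices $P_i$ realising the $f_i$.

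It remains to verify the Clifford relations. Algebraically, $f_i(x)=\scal{P_i x,x}$ gives $\scal{\nabla f_i,\nabla f_j}_{\sphere^{2l-1}}=4\scal{P_j P_i x,x}-4f_if_j$, while the Riemannian submersion property on the regular stratum combined with the curvature-$4$ metric on $Q$ yields $\scal{\nabla f_i,\nabla f_j}=4\delta_{ij}-4f_if_j$; comparing the two forces $\scal{P_j P_i x,x}=\delta_{ij}$ on $\sphere^{2l-1}$, which by symmetrisation is equivalent to $P_i P_j+P_j P_i=2\delta_{ij}\Id$. The principal obstacle is the disk case: extracting the individual $P_i$ rather than only the combination $\sum\scal{P_i\cdot,\cdot}^2$ from the codimension-$(m+1)$ foliation demands leveraging the full coordinate data on $Q$ beyond the radial structure, thereby pinning down the $\On(m+1)$-ambiguity inherent in the Clifford system.
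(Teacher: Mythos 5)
Your outline is correct in broad strokes (well-definedness is immediate, injectivity is Proposition~\ref{P:distinguish}, surjectivity is the substance), and the sphere case works, but the disk case---which is the generic case---has a genuine gap that you yourself flag in the last sentence without resolving. Knowing that the single basic function $F=1-2|\pi|^2$ satisfies the Cartan--M\"unzner equations and (assuming FKM rigidity) can be written as $|x|^4-2\sum_i\scal{P_ix,x}^2$ for \emph{some} Clifford system $(P_i)$ does not produce the individual quadratic forms realizing the $f_i$: the map $C\mapsto\sum_i\scal{P_ix,x}^2$ is many-to-one, as the paper itself emphasises at the start of Section~\ref{s:rigidity} (``there are examples of geometrically distinct Clifford systems giving rise to the same isoparametric foliation''). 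So you recover a Clifford system adapted to the radial part of $\pi$, but you have not shown that your particular coordinate functions $f_i$ are of the form $\scal{P_ix,x}$ for that (or any compatible) system, nor that they are even restrictions of quadratic polynomials. The gradient identity $\scal{\nabla f_i,\nabla f_j}=4\delta_{ij}-4f_if_j$ by itself cannot start the argument, because it presupposes what you want to prove, namely that each $f_i$ extends to a quadratic form so that $\nabla f_i$ is linear in $x$. A second, smaller issue: invoking ``FKM rigidity for these multiplicities'' quietly uses the classification of isoparametric hypersurfaces with $g=4$, which still has possible isolated exceptions (the paper cites \cite{Imm} for exactly this point), so it is not a safe ingredient even as a black box.

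The paper's Proposition~\ref{P:rigidity} takes a much more elementary route that sidesteps both problems. Pick $m+1$ points $p_0,\ldots,p_m$ on the boundary sphere $\sphere_C$ mutually at distance $\pi/4$. For each $i$, the composition of $\pi$ with the distance function to $\{p_i,-p_i\}$ yields a codimension-one foliation of $\sphere^{2l-1}$ whose quotient is an interval of length $\pi/2$; by Cartan's classification of isoparametric hypersurfaces with $g\le 3$ (here $g=1$), the two singular leaves are totally geodesic great subspheres of equal dimension, giving an orthogonal splitting $\RR^{2l}=V_+(p_i)\oplus V_-(p_i)$ and hence a symmetric involution $P_i$. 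The Clifford anticommutation $P_iP_j=-P_jP_i$ is then verified directly by following a horizontal geodesic from $E_+(P_0)$ to $E_-(P_0)$ through the preimage of $p_1$ at time $\pi/4$ and observing how $P_0$ reflects it. This construction produces each $P_i$ individually, requires only the $g=1$ case of Cartan (no $g=4$ classification), and makes no appeal to the FKM polynomial at all. If you want to salvage your approach, you would need to replace the $g=4$/FKM step with something like this $g=1$ argument applied coordinate-by-coordinate; as written, the disk case of the surjectivity argument does not close.
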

This is somewhat surprising, since it establishes an equivalence between purely algebraic  and purely geometric objects.

The paper is structured as follows. After preliminary Section \ref{s:preliminaries} we provide the construction of the Clifford foliations in Section \ref{s:construction} and that of composed foliations in Section \ref{s:composed}. In Section \ref{s:composed} we also prove that both Clifford and composed foliations are singular Riemannian foliations, thereby finishing the proofs of the first  statements of Theorems \ref{T:CliffordFoliations} and \ref{T:ComposedFoliations}. In Sections  \ref{s:rigidity} we prove Theorem \ref{T:rigidity} and finally in Section \ref{s:homogeneous} we prove the homogeneity statements in Theorems  \ref{T:CliffordFoliations} and \ref{T:ComposedFoliations}. In Section \ref{S:CROSS} we extend the construction of Clifford and composed foliations to the case of complex and quaternionic projective spaces, and the Cayley plane. The last Section \ref{s:properties} is devoted to pointing out some properties that make Clifford and composed foliations very different from homogeneous ones. With the exception of the Hopf fibration $\sphere^{15}\to \sphere^8$, the quotient of every other previously known \indecomposable{} foliation was isometric to the orbit space of a group action, and therefore shared many properties of homogeneous foliations. The main goal of this last section is thus to provide evidence that homogeneous foliations are indeed very special.

\begin{ack}
The author thanks Alexander Lytchak for many helpful conversations and for inspiring Section \ref{s:properties}, as well as Karsten Grove, Wolfgang Ziller and Marcos Alexandrino for their interest and many comments on a preliminary version of this work. The author also thanks Luigi Vezzoni and the whole Mathematics Department of Universit\`a di Torino for the hospitality during his visit, where part of this work was produced.
\end{ack}
%
% PRELIMINARIES
%

\section{Preliminaries}\label{s:preliminaries}
\subsection{Singular Riemannian foliations}
\begin{defn}
Let $M$ be a Riemannian manifold, and $\fol$ a partition of $M$ into complete, connected, injectively immersed submanifolds, called \emph{leaves}. The pair $(M,\fol)$ is called:
\begin{itemize}
\item a \emph{singular foliation} if there is a family of smooth vector fields $\{X_i\}$ that span the tangent space of the leaves at each point.
\item a \emph{transnormal system} if any geodesic starting perpendicular to a leaf, stays perpendicular to all the leaves it meets. Such geodesics are called \emph{horizontal geodesics}.
\item a \emph{singular Riemannian foliation} if it is both a singular foliation and a transnormal system.
\end{itemize}
\end{defn}

Given a singular foliation $(M,\fol)$, the  \emph{space of leaves}, denoted by $M/\fol$, is the set of leaves of $\fol$ endowed with the topology induced by the \emph{canonical projection} $\pi:M\to M/\fol$ that sends a point $p\in M$ to the leaf $L_p\in \fol$ containing it.

On a singular Riemannian foliation $(M,\fol)$ it is possible to define a stratification, as follows. For each nonnegative integer $r$ define $\Sigma_r$ to be the union of leaves of dimension $r$. The connected components of each $\Sigma_r$ are (possibly noncomplete) submanifolds, and such connected components are called \emph{strata} of $(M,\fol)$. We denote by $\dim\fol$ the maximum dimension of the leaves in $\fol$, and call \emph{regular leaf} a leaf of maximal dimension and \emph{regular point} a point in a regular leaf. The set of regular leaves $\Sigma_{\dim \fol}$ is open, dense and connected, and therefore it defines a stratum which we call the \emph{regular stratum}.

A singular Riemannian foliation $(M,\fol)$ is called \emph{closed} if all the leaves of $\fol$ are closed. If $(M,\fol)$ is a closed foliation then all the leaves are at a constant distance from each other, and the space of leaves $M/\fol$ has the structure of a Hausdorff metric space. Moreover, the strata $\Sigma$ project to orbifolds in $M/\fol$, and the restriction of $\pi:M\to M/\fol$ to $\Sigma$ is a Riemannian submersion. In particular, $M/\fol$ is stratified by orbifolds $\Sigma/\fol$, and the \emph{regular stratum} $\Sigma_{\dim \fol}/\fol$ is open and dense in $M/\fol$.

A typical example of singular Riemannian foliation is provided by the orbit decomposition of a Riemannian manifold $M$ into the orbits of an isometric actions of a connected Lie group. Such foliations are called \emph{homogeneous}.

Finally, we define two singular Riemannian foliations $(M,\fol)$, $(M',\fol')$ \emph{congruent} if there is an isometry of $M\to M'$ that takes leaves of $\fol$ isometrically onto leaves of $\fol'$.
\subsection{Clifford algebras and Clifford systems}\label{S:Clifford-syst}
In this section we recall the basic definitions and results on Clifford algebras and Clifford systems, which we will need later on, see reference \cite[Section 3]{FKM}.

The \emph{Clifford algebra} $C\ell_{m}(\RR)=C\ell(V)$ is constructed from a (real) vector space $V$ of dimension $m$ with a positive definite inner product $\scal{,}$ and is defined by the quotient of the tensor algebra $T(V)$ by the ideal $x\otimes y+y\otimes x-2\scal{x,y}1$, where $1$ is the unit element in $T(V)$. The vector space $V$ naturally embeds in $C\ell(V)$, and every $x,y\in V$ satisfy the relation
\[
x\cdot y + y\cdot x= 2\scal{x,y} 1.
\]
A \emph{representation} of a Clifford algebra $C\ell_{m}(\RR)$, or \emph{Clifford module}, is an algebra homomorphism $\rho:C\ell_{m}(\RR)\to \End(\RR^{n})$. Two representations $\rho, \rho'$ are said to be \emph{equivalent} if there is an isomorphism $A\in \GL(\RR^n)$ such that $\rho'=A^{-1}\circ \rho \circ A$.
The restriction
\[
\rho|_V:V\to\End(\RR^n)
\]
will be called \emph{Clifford system} on $\RR^n$, and denoted by $C$. We will also denote by $\RR_C$ the image $\rho(V)$, and call $\dim(V)$ the \emph{rank} of $C$. Given an orthonormal basis $x_0,\ldots x_{m-1}$ of $V$, the images $P_i=\rho(x_i)\in \RR_C$ are matrices that satisfy the relations $P_i^2=Id$ and $P_iP_j=-P_jP_i$ for $i\neq j$. %\begin{lem}
%It is possible to find an inner product $\scal{,}$ on $\RR^n$ such that $\RR_C$ consists of symmetric matrices.
%\end{lem}
%\begin{proof}
%Let $x_0,\ldots x_{m-1}$ be a basis of $V$ and let $P_i=C(x_i)$, $i=0,\ldots m-1$. Fix an inner product $\ll\,,\,\gg$ on $\RR^n$, and define a new inner product
%\[
%\ll u, v\gg_0=\ll u, v \gg+\ll P_0u,P_0v\gg.
%\]
%We can define new inner products $\ll\,,\, \gg_{i}$, $i=1,\ldots m-1$ inductively, by letting
%\[
%\ll u,v \gg_{i+1}=\ll u,v\gg_i+ \ll P_{i+1}u,P_{i+1}v \gg_i
%\]
%and finally define $\scal{,}=\ll \,,\,\gg_{m-1}$.
%It can be easily checked that $\scal{u,v}=\scal{P_iu,P_iv}$ for every $u,v\in \RR^n$ and every $i=0,\ldots m-1$. Then
%\[
%\scal{P_iu,v}=\scal{P_iu,P_i^2v}=\scal{u,P_iv}\qquad \forall u,v\in \RR^n, \, i=0,\ldots m-1
%\]
%and therefore the matrices $P_i$ are also symmetric with respect to $\scal{,}$. In particular, any element of $\RR_C$ is symmetric.
%\end{proof}
For every Clifford system $C$ it is possible to find an inner product $\scal{\,,\,}$ on $\RR^n$ such that $\RR_C$ consists of symmetric matrices, and from now on we will fix one such inner product. If one endows $\Sym^2(\RR^n)$ with the inner product $\scal{A,B}={1\over n}\tr(AB)$, the map $C:V\to R_C\In \Sym^2(\RR^n)$ is an isometry, i.e., $\scal{C(x),C(y)}=\scal{x,y}$.

Let $\sphere_C$ denote the unit sphere in $\RR_C$. For any $P\in \sphere_C$, $P^2=Id$ and therefore $P$ has eigenvalues $\pm 1$, with eigenspaces $E_{\pm}( P)$. If $Q\perp P$, $PQ=-QP$ and therefore $Q$ takes the \emph{positive eigenspace} $E_+( P)$ isomorphically into the \emph{negative eigenspace} $E_-( P)$, and vice versa. In particular, $\dim E_+( P)=\dim E_-( P)$ and since $\RR^n$ splits as a sum $E_+( P)\oplus E_-( P)$, $n$ is always even dimensional, and we will write $n=2l$.
\\

Given two Clifford systems $C: V\to \Sym^2(\RR^{2l})$, $C':V\to \Sym^2(\RR^{2r})$ on the same Clifford algebra $C\ell(V)$, one can produce a new Clifford system $C\oplus C': V\to \Sym^2(\RR^{2(l+r)})$ by letting $(C\oplus C')(x)=(C(x),C'(x))$. We call $C\oplus C'$ a \emph{reducible} Clifford system. Any Clifford system that cannot be written as a non trivial sum is called \emph{irreducible}. If  $C$ is an irreducible Clifford system of rank $m+1$ on $\RR^{2l}$ then $l=\delta(m)$, where the function $\delta(m)$ is given as follows
\begin{equation}\label{E:table-delta-m}
\begin{array}{c||c|c|c|c|c|c|c|c|c}m & 1 & 2 & 3 & 4 & 5 & 6 & 7 & 8 & 8+n \\\hline \delta(m) & 1 & 2 & 4 & 4 & 8 & 8 & 8 & 8 & 16\delta(n)\end{array}
\end{equation}

Two Clifford systems $C,C':V\to \Sym^2(\RR^n)$ are \emph{algebraically equivalent} if there is an isometry $A\in \On(\RR^n)$ such that $C'=A^{-1}\circ C \circ A$, and \emph{geometrically equivalent} if there is an isometry $A\in \On(\RR^n)$ such that $\RR_{C'}=\RR_{A^{-1}\circ C \circ A}$. If $m\not\equiv 0 (\mmod 4)$ there is a unique irreducible Clifford system on $\RR^n$ up to algebraic equivalence, and in particular geometric equivalence. For $m\equiv 0 (\mmod 4)$, there are two algebraic equivalence classes of Clifford systems, such that if $(P_0,P_1,\ldots P_m)$ is the basis for one such class then the other can be identified with $(-P_0,P_1,\ldots, P_m)$. In particular, there is one geometric class of irreducible Clifford systems for $m\equiv 0 (\mmod{4})$ as well.

Any Clifford system is algebraically equivalent to a direct sum of irreducible ones. In particular if $C$ is a Clifford system of rank $m+1$ on $\RR^{2l}$ then $l=k\delta(m)$ for some $k>0$ and if $m\not\equiv 0 (\mmod 4)$ there is only one algebraic equivalence of Clifford systems for each $k$. For $m\equiv 0 (\mmod 4)$, however, a Clifford system of rank $m+1$ on $\RR^{2l}$, $l=k\delta(m)$ can be obtained by taking combinations of the two algebraically distinct irreducible Clifford systems, resulting in $\lfloor{k\over 2}\rfloor+1$ geometrically distinct Clifford systems. These can be told apart by the invariant $|\tr(P_0\cdot P_1\cdot\ldots \cdot P_m)|$, where $(P_0,\ldots P_m)$ is a basis of $C$, which takes exactly the $\lfloor{k\over 2}\rfloor +1$ dinstinct values $k-2j$, $2j\leq k$.

We will use the notation $C_{m,k}$ to denote a Clifford system of rank $m+1$ on $\RR^{2k\delta(m)}$. By the discussion above, when $m\not\equiv 0 (\mmod 4)$ or $k=1$ the notation $C_{m,k}$ uniquely determines the Clifford system up to geometric equivalence.

Finally, we recall that if $C$ is a Clifford system and $P,Q$ are elements in $\RR_C$, then $\scal{Px,Qx}=\scal{P,Q}\|x\|^2$.

\subsection{The construction of the FKM examples}\label{constructionFKM}
In \cite{FKM}, the authors use Clifford system to produce new examples of isoparametric hypersurfaces in spheres with $4$ principal curvatures. In the following we will refer to them as the \emph{FKM examples}. Given a Clifford system $C$ of rank $m+1$ on $\RR^{2l}$, $l=k\delta(m)$ and fixing a basis $P_0,\ldots P_m$ of $C$, they define a polynomial $F:\RR^{2l}\to \RR$ by
\[
F(x)=\scal{x,x}-2\sum_{i=0}^m\scal{P_ix,x}^2
\]
This polynomial restricts to a map $F_0:\sphere^{2l-1}\to [-1,1]$, such that the preimages of the level sets are smooth, closed submanifolds of $\sphere^{2l-1}$. These submanifolds depend on the values of $m$ and $l$.
\begin{itemize}
\item If $l>m+1$, $F_0$ is surjective, and the level sets are connected. The regular level sets form a family of isoparametric submanifolds, while the preimages $M_{\pm}=F_0^{-1}(\pm 1)$ are the focal submanifolds.
\item If $l=m+1$ (which can only happen for $(m,k)\in\{(1,2), (3,1), (7,1)\}$) then $F_0$ is still surjective, but the fibers of $F_0$ are disconnected except for $M_-$, which is a hypersurface.
\item If $l=m$ (which can only happen for $(m,k)\in\{(2,1), (4,1), (8,1)\}$) then $F_0\equiv -1$, and $M_-=\sphere^{2l-1}$.
\end{itemize}
The map $F_0$ restricts to a submersion in the regular part $\sphere^{2l-1}\setminus(M_+\cup M_-)$. This map is not a Riemannian submersion, nevertheless it can be modified to become one, and its qutiotient is an interval of length $\pi/4$. 

Restricting to the generic case $l>m+1$, most of these examples are non homogeneous. More specifically, given a Clifford system $C$ of rank $m+1$ on $\RR^{2l}$, $l=k\delta(m)$, the corresponding FKM example is homogeneous only for the following values of $(m,k)$ (cf. \cite[Section 4.4]{FKM}, \cite[Table F]{GWZ}):
\begin{equation}\label{Table homog}
\begin{array}{c||c|c|c|c}(m,k) & (1,k) & (2,k) & (4,k) & (9,1) \\ \hline condition & k\geq 2 & k\geq 1 & k\geq 1,\;P_0P_1P_2P_3 P_4=\pm Id & - \end{array}
\end{equation}
Where $(P_0,\ldots, P_4)$ is a basis of $C$.

%
% THE CONSTRUCTION
%

\section{The construction}\label{s:construction}

We now proceed to define the new examples of singular Riemannian foliations of higher codimension. Let $C$ be a Clifford system of rank $m+1$ on $\RR^{2l}$, $l=k\delta(m)$. On the unit sphere $\sphere^{2l-1}\In \RR^{2l}$ (endowed with the canonical inner product which we also denote by $\scal{\cdot,\cdot}$), consider the function
\begin{align*}
\pi_C:\sphere^{2l-1}&\lra \RR_C=\RR^{m+1}
\end{align*}

that takes $x\in \sphere^{2l-1}$ to the unique element $\pi_C(x)\in \RR_C$ defined by the property
\begin{equation}\label{property}
\scal{\pi_C(x), P}=\scal{Px,x}\qquad \forall P\in \RR_C
\end{equation}
Fixing an orthonormal basis $(P_0,\ldots, P_m)$ of $\RR_C$, the map $\pi_C$ can be rewritten as
\[
\pi_C(x)=\Big(\scal{P_0x,x},\ldots \scal{P_mx,x}\Big).
\]
\begin{lem}
The image of $\pi_C$ is contained in the unit disk $\DD_C$ of $\RR_C$.
\end{lem}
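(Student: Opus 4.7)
The plan is to show $\|\pi_C(x)\| \leq 1$ for every $x \in \sphere^{2l-1}$ by reducing it to the elementary fact that a symmetric involution has operator norm one. The key observation is that every unit vector in $\RR_C$ is itself a symmetric involution: if $P = \sum_i a_i P_i$ with $\sum_i a_i^2 = 1$, then
\begin{equation*}
P^2 = \sum_{i,j} a_i a_j P_i P_j = \sum_i a_i^2\,\mathrm{Id} + \sum_{i\neq j} a_i a_j P_iP_j = \mathrm{Id},
\end{equation*}
since the cross terms cancel by the anticommutation relation $P_iP_j = -P_jP_i$. (Here I am implicitly using that $(P_0,\ldots,P_m)$ is orthonormal for the inner product $\scal{A,B} = \frac{1}{2l}\tr(AB)$, which itself follows from $P_i^2 = \mathrm{Id}$ and $\tr(P_iP_j) = -\tr(P_jP_i)$ for $i \neq j$.)

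Next I would exploit the defining property \eqref{property} dually. Since $\pi_C(x) \in \RR_C$, its norm equals
\begin{equation*}
\|\pi_C(x)\| = \sup_{P \in \sphere_C} \scal{\pi_C(x), P} = \sup_{P \in \sphere_C}\scal{Px,x}.
\end{equation*}
For any $P \in \sphere_C$ the first step shows $P$ is a symmetric involution, hence orthogonal with eigenvalues $\pm 1$. Therefore $|\scal{Px,x}| \leq \|x\|^2 = 1$, and taking the supremum yields $\|\pi_C(x)\| \leq 1$, which is exactly the statement that $\pi_C(x) \in \DD_C$.

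There are no real obstacles here; the only subtlety is keeping straight the two inner products in play (the ambient one on $\RR^{2l}$ and the trace inner product on $\Sym^2(\RR^{2l})$ under which $C\colon V \to \RR_C$ is an isometry), so that the dual formula for $\|\pi_C(x)\|$ is applied in the correct ambient space. As a sanity check, one can also verify the bound directly: the vectors $P_0x,\ldots,P_mx$ are orthonormal since $\scal{P_ix,P_jx} = \scal{P_jP_ix,x} = 0$ for $i \neq j$ and $\|P_ix\|^2 = \scal{P_i^2x,x} = 1$, so Bessel's inequality applied to $x$ gives $\|\pi_C(x)\|^2 = \sum_i \scal{P_ix,x}^2 \leq \|x\|^2 = 1$.
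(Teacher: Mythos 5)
Your proof is correct and the main argument is essentially the same as the paper's: both hinge on the defining relation \eqref{property} together with the fact that the operator norm and the trace-inner-product norm agree on $\RR_C$ (because unit vectors in $\RR_C$ are symmetric involutions). The paper simply specializes \eqref{property} at $P = \pi_C(x_0)$ itself, getting $\|P\|^2 = \scal{Px_0,x_0} \leq \|P\|$, whereas you take a supremum over $P \in \sphere_C$; these are two ways of organizing the same estimate. Your closing Bessel-inequality remark — that $P_0x,\ldots,P_mx$ are orthonormal so $\sum_i \scal{P_i x, x}^2 \leq \|x\|^2$ — is a slightly more elementary and self-contained route, since it avoids invoking the operator-norm characterization altogether, and it would make a perfectly good standalone proof.
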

\begin{proof}
Let $x_0\in \sphere^{2l-1}$ and $P=\pi_C(x_0)$. It is enough to show that $\|P\|\leq 1$. By the defining equation \eqref{property} we have
\begin{equation}\label{inequality}
\|P\|^2=\scal{P,P}=\scal{Px_0,x_0}\leq \|P\|\cdot \|x_0\|^2=\|P\|
\end{equation}
Hence $\|P\|\leq 1$ as we wanted.
\end{proof}
The relation with the polynomial $F_0$  of Ferus, Karcher, Munzner is explicit, as $F_0$ factors through $\pi_C$ as $F_0=f\circ \pi_C$, where $f:\RR_C\to \RR$ is the polynomial
\begin{equation}\label{E:MyExaToFKM}
f\left(P\right)=1-2\|P\|^2.
\end{equation}
We endow $\DD_C$ with a hemisphere metric of constant sectional curvature $4$, so that the boundary $\sphere_C=\partial \DD_C$ is totally geodesic. From now on, we will always assume that the metric on $\DD_C$ is the round one. 
\\

\begin{rem}\label{R:M+}
By equation \eqref{E:MyExaToFKM}, the preimages under $\pi_C$ of the concentric spheres in $\DD_C$ give back the FKM family associated to the Clifford system $C$. In particular the preimage of the origin is the focal manifold $M_+$ and the preimage of the boundary is $M_-$.
\end{rem}

\begin{rem}  If $C$ and $C'$ are algebraically equivalent Clifford systems, by definition there exists an orthogonal map $A\in O(\RR^{2l})$ such that $\pi_{C'}=\pi_C\circ A$. In particular, up to orthogonal transformation $\pi_C$ only depends on the algebraic equivalence class of $C$. We will see in Section \ref{P:distinguish} that the converse is also true, namely the geometric equivalence class of $C$ is uniquely determined by $\pi_C$.
\end{rem}
\begin{prop}
Given a Clifford system $C$ of rank $m+1$ on $\RR^{2l}$, $l=k\delta(m)$, the corresponding map
\begin{equation}
\pi_C:\sphere^{2l-1}\to \DD_C
\end{equation}
satisfies:
\begin{enumerate}
\item\label{1} The preimage of $P\in \sphere_C=\partial \DD_C$ is the unit sphere $E_+^1(P )$ in the positive eigenspace $E_+( P)$. Moreover, the restriction $\pi|_{M_-}:M_-\to \sphere_C$ is a submersion.
\item\label{2} If $l=m$, the image of $\pi_C$ is $\sphere_C$.
\item\label{3} If $l \geq m+1$, the map $\pi_C$ is surjective onto $\DD_C$ and its restriction to the regular part is a submersion.
\item\label{4} If $l> m+1$, the fibers of $\pi_C$ are connected.
\item\label{5} If $l=m+1$, $C$ can be extended to a Clfford system $C'$ of rank $m+2$, the image of $\pi_{C'}$ is $\sphere_{C'}=\sphere^{m+1}$ and $\pi_C$ factors as $\pi_C=Pr\circ \pi_{C'}$, where $Pr:\sphere_{C'}\to \DD_C$ is given by
\[
Pr(x_1,\ldots x_m, x_{m+1})=(x_1,\ldots x_m).
\]
In particular, the fibers of $\pi_C$ are not connected.
\end{enumerate}
\end{prop}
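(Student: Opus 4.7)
My plan is to handle the five parts through two main ingredients: a critical-point criterion for $\pi_C$ and the identity $F_0=f\circ\pi_C$ from \eqref{E:MyExaToFKM}, which lets me import known results about the FKM level sets. The key preliminary computation is that for $v\in T_x\sphere^{2l-1}$ and any $Q\in\RR_C$, $\scal{d\pi_C(v),Q}=2\scal{Qv,x}$, so $d\pi_C$ fails to surject at $x$ if and only if $x$ is an eigenvector of some nonzero $Q\in\RR_C$. For part (1), if $\pi_C(x)=P\in\sphere_C$ then equality in the Cauchy-Schwarz chain \eqref{inequality} forces $Px=x$, i.e.\ $x\in E_+^1(P)$; conversely, writing $Q=\alpha P+Q'$ with $Q'\perp P$, anticommutation gives $Q'x\in E_-(P)$, so $\scal{Qx,x}=\alpha=\scal{Q,P}$. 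For the submersion $\pi_C|_{M_-}:M_-\to\sphere_C$, any $Q'\in T_P\sphere_C$ is hit by $v=Q'x\in E_-(P)\subset T_xM_-$, since $\scal{d\pi_C(v),Q'}=2\|Q'x\|^2=2\|Q'\|^2$ by the last identity of Section~\ref{S:Clifford-syst}.

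For parts (2) and (3), equation \eqref{E:MyExaToFKM} rewrites as $\|\pi_C\|^2=(1-F_0)/2$. When $l=m$ FKM gives $F_0\equiv-1$, so $\pi_C$ lands in $\sphere_C$ and (1) yields surjectivity onto $\sphere_C$. When $l\geq m+1$ FKM gives $F_0(\sphere^{2l-1})=[-1,1]$, so $\|\pi_C\|$ surjects onto $[0,1]$; to upgrade to surjectivity of $\pi_C$ itself, I use the equivariance $\pi_C(Px)=R_P(\pi_C(x))$ for $P\in\sphere_C$ acting on $\RR^{2l}$, where $R_P(Q)=2\scal{Q,P}P-Q$ fixes $P$ and negates $P^\perp$, derived from the direct computation $PP_iP=2\scal{P_i,P}P-P_i$. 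The $R_P$ act transitively on each sphere of $\DD_C$ (given $Q,Q'$ of equal norm, take $P=(Q+Q')/\|Q+Q'\|$), so the image of $\pi_C$, being closed under these transformations and meeting every radius, must equal $\DD_C$. Finally, by the critical-point criterion and (1) the critical set of $\pi_C$ equals $M_-$, so $\pi_C$ is a submersion on $\sphere^{2l-1}\setminus M_-$ and in particular on the regular part.

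For part (4), assume $l>m+1$. The boundary fibers $E_+^1(P)$ are spheres of dimension $l-1\geq 2$ (connected), and the fiber over $0$ is the focal manifold $M_+$, connected by FKM. For $0<\|P\|<1$, pick an orthonormal basis $P_0,\ldots,P_m$ of $\RR_C$ with $P=\|P\|P_0$ and decompose $x=x_++x_-\in E_+(P_0)\oplus E_-(P_0)$; the fiber equations become $\|x_\pm\|^2=(1\pm\|P\|)/2$ together with $\scal{P_ix_+,x_-}=0$ for $i=1,\ldots,m$. Since $P_i|_{E_+(P_0)}:E_+(P_0)\to E_-(P_0)$ is an isometry and, for $Q=\sum c_iP_i$, $\|Qx_+\|=\|Q\|\cdot\|x_+\|$ by the last identity of Section~\ref{S:Clifford-syst}, the vectors $P_1x_+,\ldots,P_mx_+$ are linearly independent in $E_-(P_0)$ whenever $x_+\neq 0$. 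The projection $x\mapsto x_+$ then realizes the fiber as a smooth bundle over a sphere in $E_+(P_0)$, with typical fiber a sphere of dimension $l-1-m\geq 1$ in $E_-(P_0)$; both base and fiber are connected, so the total space is connected.

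Finally for part (5), all three cases $(m,k)\in\{(1,2),(3,1),(7,1)\}$ satisfy $\delta(m+1)=m+1=l$ in table~\eqref{E:table-delta-m}, so $C$ extends to a Clifford system $C'=(P_0,\ldots,P_{m+1})$ of rank $m+2$ on the same $\RR^{2l}$. Applying part (2) to $C'$ gives $\pi_{C'}(\sphere^{2l-1})=\sphere_{C'}=\sphere^{m+1}$, and the factorization $\pi_C=Pr\circ\pi_{C'}$ is immediate from the coordinate formula. For $\|P\|<1$, the set $Pr^{-1}(P)\cap\sphere_{C'}$ consists of the two distinct points $(P,\pm\sqrt{1-\|P\|^2})$, and their $\pi_{C'}$-preimage spheres are disjoint since a common point $x$ would force $P_{m+1}x=0$, contradicting $P_{m+1}^2=Id$. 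I expect the main obstacle to be the fiber-connectedness argument in part (4): checking that $x\mapsto x_+$ is a genuine locally trivial fibration requires that the $m$ orthogonality conditions cut out a constant-dimensional sphere as $x_+$ varies, which is precisely where the strict inequality $l>m+1$ enters.
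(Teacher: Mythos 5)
Your proof is essentially correct and covers all five parts, with a few places where you take a genuinely different route from the paper.

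Parts (1), (4) and (5) follow the paper's approach almost exactly: the Cauchy--Schwarz equality argument, the decomposition $x=x_++x_-$ with the orthogonality conditions $\scal{P_ix_+,x_-}=0$ cutting out a sphere bundle, and the factorization through a larger Clifford system. The one organizational difference in (4) is that the paper proves connectedness only for the central fiber $M_+$ via the sphere-bundle decomposition and then invokes the homeomorphism type of the other interior fibers (via the parametrization $M_{(Q,t)}$ from part (3)), whereas you run the decomposition separately for each interior fiber. Your version is self-contained but, as you observe, requires noting that the spaces $V_{x_+}=\mathrm{span}(P_1x_+,\ldots,P_mx_+)^\perp\cap E_-(P_0)$ form a smooth constant-rank subbundle, which does hold since the $P_ix_+$ are pairwise orthogonal of constant norm. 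Also, your case $P=0$ does not require a citation to FKM: the argument you give for $0<\|P\|<1$ works verbatim with $P_0$ an arbitrary basis vector, and in fact that \emph{is} the paper's argument for $M_+$.

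In parts (2) and (3) you take a different path. For (2), you invoke $F_0\equiv -1$ from the FKM summary in Section~2.3, while the paper does the direct computation $\sum_i\scal{P_ix,x}^2=1$; both are fine. For the surjectivity in (3), the paper constructs an explicit point $x$ in $M_+$ and then sweeps out $\DD_C$ by the parametrization $M_{(Q,t)}=\{\cos t\,x+\sin t\,Qx\}$, while you use the $\Spin(m+1)$-equivariance $\pi_C(Px)=R_P(\pi_C(x))$ together with the surjectivity of $\|\pi_C\|$ onto $[0,1]$ (again from FKM). This is a clean alternative, but it has one small gap: the formula $P=(Q+Q')/\|Q+Q'\|$ breaks down when $Q'=-Q$ (and is vacuous when $Q=Q'=0$). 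For $Q'=-Q\neq 0$ one can instead pick any $P\in\sphere_C$ with $P\perp Q$ (possible since $m\geq 1$), so that $R_P(Q)=-Q$, or compose two reflections; for $Q=0$, $\pi_C^{-1}(0)\neq\emptyset$ is already furnished by $\|\pi_C\|$ hitting $0$. Your submersion argument via the critical-point criterion (a nonzero $Q\in\RR_C$ with $Qx\in\RR x$ exists iff $x\in M_-$) is also a valid and somewhat more conceptual alternative to the paper's explicit linear-independence check of the gradients $X_{P_i}(x)$.
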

\begin{proof}
\ref{1}) Let $x_0\in \sphere^{2l-1}$ and $P=\pi_C(x_0)$. $P$ lies in $\sphere_C$ is and only if $\|P\|=1$. The inequality \eqref{inequality} is then an equality, and in particular $\scal{Px_0,x_0}=\|Px_0\|\cdot \|x_0\|$, which implies that $x_0$ is an eigenvector for $P$. Since $P$ has eigenvalues $\pm 1$, $Px_0=\pm x_0$, and again from $\scal{Px_0,x_0}=1$ is must be $Px_0=x_0$.

On the other hand, if $x_0\in E^1_+(P )$ for some $x_0\in \sphere^{2l-1}$, by \eqref{property}
\[
\scal{P,\pi_C(x_0)}=\scal{Px_0,x_0}=1
\]
and therefore $P=\pi_C(x_0)$. Thus the whole unit sphere $E^1_+(P )$ projects to $P$. In particular, $M_-$ embeds in $\sphere^{2l-1}\times \sphere_C$ as $M_-=\{(x,P)\in \sphere^{2l-1}\times \sphere_C| Px=x\}$ and $\pi_C$ is just the projection onto the second factor, which can easily be checked to be a submersion.

\ref{2}) Fix an orthonormal basis $(P_0,\ldots P_m)$ of $\RR_C$. Given $x\in \sphere^{2m-1}$, let $x=ax_++bx_-$ where $x_{\pm}\in E_{\pm}(P_0)$ are unit vectors, and $a^2+b^2=1$. We want to prove that
\[
\sum_{i=0}^m \scal{P_ix,x}^2=1.
\]
For $i=0$ we have $\scal{P_0x,x}=a^2-b^2$, while for $i=1,\ldots m$ we compute
\[
\scal{P_ix,x}=a^2\scal{P_ix_+,x_+}+b^2\scal{P_ix_-,x_-}+2ab\scal{P_ix_+,x_-}
\]
On the one hand, since $P_ix_\pm\in E_\mp(P_0)$ for $i=1,\ldots m$, the equation above simplifies as $\scal{P_ix,x}=2ab\scal{P_ix_+,x_-}$. On the other hand, since $m=l=\dim E_-(P_0)$, the vectors $P_1x_+, \dots P_mx_+$ form an orthonormal basis of $E_-(P_0)$ and thus
\[
\sum_{i=1}^{m}\scal{P_ix,x}^2=4a^2b^2\sum_{i=1}^m\scal{P_ix_+,x_-}^2=4a^2b^2.
\]
Therefore
\[
\sum_{i=0}^m \scal{P_ix,x}^2=\scal{P_0x,x}^2+\sum_{i=1}^m \scal{P_ix,x}^2=(a^2-b^2)^2+4a^2b^2=1
\]
as we wanted. Moreover, since the preimage of any $P\in \sphere_C$ consists of the unit sphere in $E_+(P)$, it is non empty and thus the image of $\pi_C$ is $\sphere_C$.

\ref{3}) Fix an orthonormal basis $(P_0,\ldots P_m)$ of $\RR_C$ and let $x_+\in E_+^1(P_0)$. As $P_0$ anticommutes with $P_i$, $i>1$, we have $P_ix_+\in E_-(P_0)$. If $l\geq m+1$ there is a unit vector $x_-\in E_-(P_0)$ which is perpendicular to $P_1x_+, \ldots P_mx_+$, and let $x={\sqrt{2}\over 2}(x_++x_-)\in \sphere^{2l-1}$. It is easy to check that $\pi_C(x)=0$, and therefore the preimage of the origin (which is the manifold $M_+$ as observed in Remark \ref{R:M+}) is nonempty in this case. Moreover, the set
\[
M_{(Q,t)}=\{\cos(t)x+\sin(t)Qx,\; x\in M_+\}, \qquad Q\in \sphere_C,\; t\in [0,\pi/4]
\]
is contained in (and by dimensional reasons it coincides with) the preimage of the point $\sin(2t)Q$. Since any point in $\DD_C$ can be written in this way, it follows that $\pi_C$ is surjective onto $\DD_C$.
Moreover for any $P\in \sphere_C$ the gradient of $x\mapsto \scal{Px,x}$ in $\sphere^{2l-1}$ is $X_P(x)=2Px-2\scal{Px,x}x$. If $x$ projects to the interior of $\DD_C$, the set $\big\{X_{P_0}(x), \ldots X_{P_m}(x)\big\}$ is linearly independent and it spans a $m+1$-dimensional subspace of $T_x\sphere^{2l-1}$ orthogonal to the fibers of $\pi_C$, thus projecting onto $T_{\pi_{_C}(x)}\DD_C$. Therefore $\pi_C$ is a submersion.

\ref{4}) Fix an orthonormal basis $(P_0,\ldots P_m)$ of $\RR_C$ and take $x_+\in E^1_+(P_0)$. On $E_-(P_0)$, consider the orthogonal complement $V_{x_+}$ of $span(P_1x,\ldots P_mx)$, and take its unit sphere $V_{x_+}^1\in \sphere^{2l-1}$. The dimension of $V_{x_+}^1$ is $l-m-1$ and for every $x_-\in V_{x_+}^1$ the element $x={\sqrt{2}\over 2}(x_++x_-)\in \sphere^{2l-1}$ satisfies $\pi_C(x)=0$ and thus $x$ belongs to $M_+$.

Taking the union of all $V_x^1$ as $x$ varies in $E_+^1(P_0)$, we obtain a sphere bundle $V^1\ra E_+^1(P_0)$ whose fiber has dimension $l-m-1$. In particular, if $l>m+1$ the fiber is connected, and so is $V^1$. As we have a surjective map $V^1\to M_+$ sending $y\in V_x^1$ to ${\sqrt{2}\over 2}(x+y)$, $M_+$ is connected as well. Finally, since all the fibers of points in the interior of $\DD_C$ are homeomorphic to each other (and, in particular, to $M_+$), every fiber is connected.

\ref{5}) If $l=m+1$, by table \ref{E:table-delta-m} it follows that $m=1,3,7$ and for all cases $m$ is not a multimple of $4$. Given a Clifford system $C'$ of rank $m+2$ in $\RR^{2l}$, by the uniqueness of Clifford systems for $m\not\equiv 0 (\mmod 4)$ it follows that $C$ is algebraically equivalent to a sub-Clifford system of $C'$. We can thus find an orthonormal basis $(P_0,\ldots P_{m+1})$ of $\RR_{C'}$ such that $(P_0,\ldots P_m)$ is a basis for $\RR_C$. Since we can express $\pi_C(x)$ as $(\scal{P_0x,x},\ldots \scal{P_{m}x,x})$ and similarly for $\pi_{C'}$, $\pi_C$ factors as $\pi_C=Pr\circ \pi_{C'}$, where $Pr:\sphere_C\to \DD_C$ is given by $(x_0,\ldots, x_{m},x_{m+1})\mapsto (x_0,\ldots, x_{m})$.
\end{proof}
\begin{rem}
Since we are interested in having connected fibers, we will not consider from now on the Clifford systems with $l=m+1$.
\end{rem}

\begin{prop}\label{FC-transnormal}
Let $C$ be a Clifford system of rank $m+1$ on $\RR^{2l}$. The fibers of $\pi_C$ define a transnormal system on $\sphere^{2l-1}$, whose leaf space is $\DD_C$ (if $l>m+1$) or $\sphere_C$ (if $l=m$) with a round metric of curvature $4$.
\end{prop}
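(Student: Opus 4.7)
The strategy is to exhibit horizontal geodesics of $\pi_C$ explicitly, verify transnormality via a concise rank-two perpendicularity argument, and identify the quotient metric by matching lengths. The key algebraic inputs are the identity $\langle Ry, R'y\rangle = \langle R, R'\rangle\,\|y\|^2$ for $R, R' \in \RR_C$ from Section~\ref{S:Clifford-syst}, the Clifford relation $PRP = 2\langle R, P\rangle P - R$, and the eigenspace swap $R\bigl(E_\pm(P)\bigr)\subseteq E_\mp(P)$ for $R \in P^\perp \cap \RR_C$.

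Given $P \in \sphere_C$, $x \in E_+^1(P)$, and a unit vector $v \in E_-(P)$ (so $\langle x, v\rangle = 0$), I consider the unit-speed great circle $\gamma(t) = \cos(t)\,x + \sin(t)\,v$ in $\sphere^{2l-1}$. Using $Px = x$, $Pv = -v$, and the eigenspace swap, a direct computation gives
\begin{equation*}
\pi_C\bigl(\gamma(t)\bigr) \;=\; \cos(2t)\,P + \sin(2t)\,\tilde Q, \qquad \tilde Q \;:=\; \sum_{P_i \perp P}\langle P_i x, v\rangle\,P_i \;\in\; P^\perp \cap \RR_C.
\end{equation*}
To show that $\gamma'(t) = -\sin(t)\,x + \cos(t)\,v$ is horizontal at $z := \gamma(t)$, I take a vertical $w \in T_z\sphere^{2l-1}$, i.e.\ $\langle w, z\rangle = \langle w, P_i z\rangle = 0$ for all $i$; in particular $w \perp Pz = \cos(t)\,x - \sin(t)\,v$. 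The two constraints $\langle w, z\rangle = 0$ and $\langle w, Pz\rangle = 0$ form a linear system in $\bigl(\langle w, x\rangle, \langle w, v\rangle\bigr)$ with determinant $-\sin(2t)$, nonzero for $t \in (0, \pi/2)$. Both inner products vanish, hence $\langle w, \gamma'(t)\rangle = 0$. The endpoints $t = 0, \pi/4$, where $\gamma(t)$ lies on $M_-$ or $M_+$, are handled by identifying $\gamma'(t)$ directly with an element of the horizontal space at those singular points.

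It remains to show that such geodesics exhaust all horizontal directions. When $l > m+1$, at $y \in M_+$ the orthonormality $\langle P_i y, P_j y\rangle = \delta_{ij}$ shows the horizontal space is spanned by $\{P_i y\}_{i=0}^m$, with unit vectors $\{Qy : Q \in \sphere_C\}$; each such $Qy$ is realized as $\gamma'$ at the midpoint of a geodesic starting at the $M_-$ point $\tfrac{1}{\sqrt{2}}(y + Qy)$. At $x \in E_+^1(P) \subseteq M_-$ the horizontal space is $E_-(P)$, parametrized directly by $v$. At a regular point, varying $P$ over $\sphere_C$ and decomposing $z = a\, x_+ + b\, x_-$ accordingly produces an $(m+1)$-parameter family of horizontal geodesics through $z$ that spans the horizontal space. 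When $l = m$ one has $\dim E_-(P) = m$, so $\{P_i x : P_i \perp P\}$ is an orthonormal basis of $E_-(P)$ and $\|\tilde Q\| = \|v\| = 1$, making $\pi_C(\gamma(t))$ stay on $\sphere_C$. Transnormality follows.

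For the quotient metric, the horizontal segment $\gamma|_{[0,\pi/4]}$ starting at a point of $M_+$ has length $\pi/4$ in $\sphere^{2l-1}$ and projects to the radial segment from $0$ to $Q$ in $\DD_C$; matching lengths forces the quotient distance from $0$ to $\sphere_C$ to be $\pi/4$, identifying $\DD_C$ with the hemisphere of radius $1/2$ and therefore of curvature~$4$. For $l = m$, the same length matching along the boundary great circles gives $\sphere_C$ the round metric of curvature~$4$. The natural $\mathrm{O}(\RR_C)$-equivariance of $\pi_C$ propagates these local identifications globally. The main obstacle is horizontality at the singular stratum $M_-$, where $d\pi_C$ drops rank; the rank-two linear system above handles this case uniformly with the regular one, which is the crux of the argument.
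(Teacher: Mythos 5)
Your proof takes essentially the same route as the paper: both work with the family of great circles $\gamma(t)=\cos(t)\,x_{\mp}+\sin(t)\,x_{\pm}$ with $x_\pm\in E_\pm^1(P)$, prove horizontality, prove that all horizontal directions are realized, and compute $\pi_C\circ\gamma$ to read off the curvature-$4$ metric on the quotient. The one genuine difference is your horizontality argument: the paper establishes the pointwise identity $\gamma'(t)=\frac{1}{\sin(2t)}\,X_P(\gamma(t))$, where $X_P(z)=Pz-\scal{Pz,z}z$ is the spherical gradient of $z\mapsto\scal{Pz,z}$, and then observes that $X_P$ is horizontal by construction. You instead pair $\gamma'(t)$ against an arbitrary vertical $w$ and use the nonsingular $2\times 2$ system in $(\scal{w,x},\scal{w,v})$ coming from $w\perp z$ and $w\perp Pz$. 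That is a clean and legitimate alternative for the horizontality step.

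However, there is a gap in the surjectivity step at regular points. You write that varying $P\in\sphere_C$ and decomposing $z$ accordingly ``produces an $(m+1)$-parameter family of horizontal geodesics through $z$ that spans the horizontal space.'' Spanning is not the right condition: for transnormality you must show that \emph{every} unit horizontal vector at $z$ is of the form $\gamma'_P(t_0)$ for some member of the family, not merely that linear combinations of the $\gamma'_P(t_0)$ fill the horizontal space. The paper closes this by reusing the identity $\gamma'_P(t_0)=\frac{1}{\sin(2t_0)}X_P(z)$, noting that $P\mapsto X_P(z)$ is linear, and that $\{X_{P_i}(z)\}_i$ spans the horizontal space at a regular $z$; hence $P\mapsto X_P(z)/\|X_P(z)\|$ maps $\sphere_C$ onto the unit horizontal sphere. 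Because your horizontality argument deliberately avoids writing down $\gamma'$ as a multiple of $X_P$, you never derive this identity, and so the assertion that the family ``exhausts'' horizontal directions at a regular point is not actually established. (Your treatments of $M_+$ and $M_-$ are correct.) Adding the one-line computation $\gamma'_P(t_0)\parallel X_P(z)$, together with the linearity of $P\mapsto X_P(z)$ and the spanning of $\{X_{P_i}(z)\}$, would close the gap; also note the family is really an $m$-parameter family (parametrized by $\sphere_C$ up to $\pm P$), not $(m+1)$-parameter.
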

\begin{proof}
We  prove the proposition when the quotient is $\DD_C$, the other case follows in a similar fashion. In order to prove the proposition, we consider the family $\mathfrak{F}$ of geodesics in $\sphere^{2l-1}$ given by
\[
\mathfrak{F}=\{\gamma(t)=\cos(t) x_-+\sin(t) x_+|\;P\in \sphere_C,\;x_\pm\in E^1_\pm(P)\}
\]
and we show that the following properties hold:
\begin{enumerate}
\item Every geodesic in $\mathfrak{F}$ is orthogonal to the fibers of $\pi_C$ at all points.
\item For every point $x\in \sphere^{2l-1}$ and vector $z$ normal to the fiber of $\pi_C$ through $x$, there is a geodesic in $\mathfrak{F}$ passing through $x$ and tangent to $z$.
\item Every geodesic in $\mathfrak{F}$ projects to a unit speed geodesic in $\DD_C$.
\end{enumerate}

1) If $x\in \sphere^{2l-1}$ projects to a point in the interior of $\DD_C$, the normal space of the fiber through $x$ is spanned by the vectors $X_{P_i}(x)=P_ix-\scal{P_ix,x}x$. On the other hand, if $x$ projects to $P\in\partial \DD_C$ then the fiber through $x$ is $E^1_+(P)$ and its normal space is just $E_-(P)$. Any geodesic $\gamma\in \mathfrak{F}$, $\gamma(t)=\cos(t) x_-+\sin(t) x_+$ for some $x_{\pm}\in E_\pm(P)$ is by definition perpendicular in $x_+$, $x_-$ to the corresponding fibers. For $t\in (0,\pi/2)$ we have $P\gamma(t)=-\cos(t)x_-+\sin(t)x_+$, $\scal{P\gamma(t),\gamma(t)}=-\cos(2t)$ and it is just matter of computations to show that $\gamma'(t)={1\over \sin(2t)} \cdot X_{P}(\gamma(t))$:
\begin{align}\label{equation-connecting-geods}
 X_P(\gamma(t))=&P\gamma(t)-\scal{P\gamma(t), \gamma(t)}\gamma(t)\\
 =&(-\cos(t) x_-+\sin(t) x_+)+\cos(2t)(\cos(t) x_-+\sin(t) x_+) \nonumber \\
=&-\cos(t) x_-+\sin(t) x_++\cos(2t)(\cos(t) x_-+\sin(t) x_+)\nonumber \\
=&\sin(2t)(-\sin(t)x_-+\cos(t)x_+) \nonumber\\
=&\sin(2t)\gamma'(t)\nonumber
\end{align}

2) If $x\in \sphere^{2l-1}$ projects to $P\in \partial \DD_C$, then it belongs to the positive eigenspace $E_+^1(P)$ and, if $z$ is perpendicular to the fiber through $x$, then it belongs to $E_-(P)$. Therefore, $\gamma(t)=\cos(t)x+\sin(t)z$ belongs to $\mathfrak{F}$ and it satisfies $\gamma(0)=x,\, \gamma'(0)=z$. If $x$ projects to a point in the interior of $\DD_C$, any vector $z$ normal to the fiber through $x$ is of the form $z=X_P(x)$ for some $P\in \sphere_C$. Such a $P$ gives a splitting $\RR^{2l}=E_+(P)\oplus E_-(P)$, and $x$ can be written as $x=\cos(t_0)x_-+\sin(t_0) x_+$ for some $x_{\pm}\in E_{\pm}(P)$. Equation \eqref{equation-connecting-geods} says that $z$ is parallel to $\gamma'(t_0)$, where $\gamma(t)=\cos(t)x_-+\sin(t)x_+$ is in $\mathfrak{F}$.

3) Notice first that the unit speed geodesics in $\DD_C$ with the round metric of constant curvature $4$ are of the form $\cos(2t)P+\sin(2t) Q$ where $P,Q\in \DD_C$ satisfy $\scal{P,Q}=0$. Given $\gamma(x)= \cos(t)x_-+\sin(t)x_+$ for some $x_{\pm}\in E_\pm^1(P)$, $P\in \sphere_C$, let $Q=\sum \scal{P_ix_+,x_-}P_i$.
Again it is just a computation to check that $\|Q\|^2\leq 1$ and thus $Q\in \DD_C$, $\scal{-P,Q}=0$, and $\pi_C(\gamma(t))=-\cos(2t)P+\sin(2t)Q$. Therefore $\pi_C(\gamma(t))$ is a geodesic in $\DD_C$, as we wanted to show.
\end{proof}
\begin{defn}
Given a Clifford system $C$ of rank $m+1$ in $\RR^{2l}$, with $l\neq m+1$, we define the \emph{Clifford foliation} $\fol_C$ to be the foliation on $\sphere^{2l-1}$ given by the fibers of $\pi_C$.
\end{defn}

\begin{rem}
By Proposition \ref{FC-transnormal}, any Clifford foliation $\fol_C$ is a transnormal system. In fact, we will prove that $\fol_C$ is a singular Riemannian foliation. This requires proving the existence of smooth vector fields spanning the leaves of the foliation. We will prove this in Proposition \ref{P:SRF} for a larger class of foliations that includes the Clifford foliations.
\end{rem}
\begin{cor}
If $C$ is a Clifford system of rank $m+1$ on $\RR^{2l}$ and $l=m$, $\pi_C:\sphere^{2m-1}\to \sphere_C$ is a Hopf fibration.
\end{cor}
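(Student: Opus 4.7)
The plan is to reduce the corollary to the classification of Riemannian submersions between round spheres with totally geodesic fibers (Escobales--Ranjan). First I would collect what has already been established: by Proposition 3.1(2) and (1), when $l=m$ the map $\pi_C\colon\sphere^{2m-1}\to \sphere_C$ is a surjective submersion, and its fiber over $P\in\sphere_C$ is the unit sphere $E_+^1(P)$ of the $m$-dimensional linear subspace $E_+(P)\subset \RR^{2m}$. In particular, every fiber is a great $(m-1)$-subsphere of $\sphere^{2m-1}$, hence a totally geodesic submanifold. The dimension constraint $l=m=\delta(m)$ combined with table \eqref{E:table-delta-m} forces $m\in\{1,2,4,8\}$, so the pair (total space, base) has dimensions $(2m-1,m)\in\{(1,1),(3,2),(7,4),(15,8)\}$, matching exactly the list of Hopf fibrations.

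Next I would check that $\pi_C$ is a Riemannian submersion onto $\sphere_C$ equipped with its round metric of curvature $4$. The argument is the one used in Proposition \ref{FC-transnormal} in the case $l>m+1$, and in the present case $l=m$ it simplifies considerably because $M_-=\sphere^{2m-1}$: every point lies in some positive eigenspace $E_+^1(P)$, every horizontal direction at a point of $E_+^1(P)$ is of the form $z\in E_-(P)$, and the horizontal geodesics $\gamma(t)=\cos(t)x_-+\sin(t)x_+$ with $x_\pm\in E_\pm^1(P)$ cover all such horizontal data. The computation in the proof of Proposition \ref{FC-transnormal}(3) gives
\[
\pi_C(\gamma(t))=-\cos(2t)P+\sin(2t)Q, \qquad Q=\sum_{i=1}^{m}\scal{P_ix_+,x_-}P_i,
\]
and, crucially, in the case $l=m$ the computation in Proposition 3.1(2) shows that $\{P_1x_+,\ldots,P_mx_+\}$ is an orthonormal basis of $E_-(P_0)$, so $\|Q\|=\|x_-\|=1$. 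Hence $\pi_C\circ\gamma$ is a unit-speed geodesic in the curvature-$4$ round metric on $\sphere_C$, and $\pi_C$ is a Riemannian submersion as required.

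Finally I would invoke the classification theorem of Escobales (and independently Ranjan): any Riemannian submersion from a round sphere onto a round sphere with totally geodesic fibers is, up to isometry of the total space, one of the Hopf fibrations $\sphere^{2m-1}\to \sphere^m$ with $m\in\{1,2,4,8\}$. Applying this to $\pi_C$ identifies it with a Hopf fibration and completes the proof. The main obstacle, if any, is verifying the Riemannian submersion property cleanly in the $l=m$ regime, since Proposition \ref{FC-transnormal} is written out only for $l>m+1$; but as indicated, the boundary computation is precisely what Proposition 3.1(2) already provides, so the remaining work is essentially bookkeeping.
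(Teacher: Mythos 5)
Your proof is correct but invokes a different final classification theorem than the paper. The paper, after noting that $\pi_C$ is a Riemannian submersion (by Proposition \ref{FC-transnormal}), cites the work of Gromoll--Grove and Wilking, which together say that \emph{any} Riemannian submersion from a round sphere with connected fibers is isometrically congruent to a Hopf fibration, without any hypothesis on the fibers being totally geodesic. You instead observe that here the fibers are explicitly great subspheres $E_+^1(P)$, hence totally geodesic, and then invoke the older Escobales--Ranjan classification of Riemannian submersions between round spheres with totally geodesic fibers. Both routes are valid. Your approach trades a deeper rigidity theorem for an elementary extra observation (total geodesy of the fibers), which in this situation is free; the paper's approach is marginally shorter since it omits that check, at the cost of appealing to the Gromoll--Grove/Wilking theorems, which are harder to prove. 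Your intermediate verification that $\|Q\|=1$ when $l=m$ (via the orthonormal basis $\{P_ix_+\}$ of $E_-(P_0)$) is a nice detail that the paper glosses over by simply invoking Proposition \ref{FC-transnormal}; it cleanly confirms that the projected geodesic stays on $\sphere_C$ and has the right speed.
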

\begin{proof}
In this particular case $\pi_C$ is a submersion, and by Proposition \ref{FC-transnormal} it is Riemannian. By the work of Grove and Gromoll \cite{GG} and Wilking \cite{Wil}, the submersion $\pi_C$ must be in fact a Hopf fibration.
\end{proof}

\subsection{Symmetries of the Clifford foliations}\label{SS:Symmetries-Clifford-Foliations}
We discuss here some natural symmetries of the Clifford foliations $(\sphere^{2l-1},\fol_C)$.
Let $P$ be an element of $\sphere_C$. For any $x, y\in \sphere^{2l-1}$, we have $\scal{Px,Py}=\scal{P^2x,y}=\scal{x,y}$ and therefore the elements of $\sphere_C$ are also orthogonal maps on $\RR^{2l}$. Moreover, by definition of $\pi_C$ we have
\[
\scal{\pi_C(Px),Q}=\scal{QPx,Px}\qquad \forall Q\in \sphere_C.
\]
Since $QP=-PQ+2\scal{P,Q} Id$, the equation before becomes
\begin{align*}
\scal{\pi_C(Px),Q}&=-\scal{PQx,Px}+2\scal{P,Q}\scal{x,Px}\\
&= -\scal{\pi_C(x),Q}+2\scal{P,Q}\scal{\pi_C(x),P}\\
&= \scal{-\pi_C(x)+2\scal{\pi_C(x),P}P, Q}
\end{align*}
Therefore, $\pi_C(Px)=-\pi_C(x)+2\scal{\pi_C(x),P}P$ and therefore there is a commutative diagram
\[\begindc{\commdiag}[50]
\obj(0,0)[Q1]{$\DD_C$} \obj(2,0)[Q2]{$\DD_C$} \obj(0,1)[S1]{$\sphere^{2l-1}$} \obj(2,1)[S2]{$\sphere^{2l-1}$}
\mor{S1}{Q1}{$\pi_C$} \mor{S2}{Q2}{$\pi_C$} \mor{S1}{S2}{$P$} \mor{Q1}{Q2}{$\rho_P$}
\enddc\]
where $\rho_P$ is the reflection of $\DD_C$ along the segment through $P$. The subgroup of $O(2l)$ generated by the elements $P\in \sphere_C$ is usually denoted $Pin(m+1)$. Its subgroup generated by the products $PQ$, $P,Q\in \sphere_C$, is $\Spin(m+1)$. Since any element $P\in \sphere_C$ can be thought as a foliated isometry of $(\sphere^{2l-1},\fol_C)$, there is a map $\eta:\Spin(m+1)\to \SO(2l)$ whose induced action on $\DD_C$ is isometric and has cohomogeneity 1. The origin $\pi_C(M_+)$ is the only singular orbit of this action, the boundary $\pi_C(M_-)$ consists of one orbit, and the quotient $\DD_C/\Spin(m+1)$ is isometric to $[0,\pi/4]=\sphere^{2l-1}/\fol_C'$, where $\fol_C'$ is the FKM example corresponding to the Clifford system $C$.

%
% COMPOSED FOLIATIONS
%

\section{Composed foliations}\label{s:composed}

%Let $(M, \fol)$ be a transnormal system with closed leaves and leaf space $\Delta$, and let $\pi: M\to \Delta$ be the canonical projection. The metric on $\Delta$ is defined so that $\pi$ is a submetry, i.e., it sends $R$- balls in $M$ to $R$-balls in $\Delta$ for any $R>0$. Given a metric space $\Delta'$ and a submetry $s:\Delta\to \Delta'$, the composistion $s\circ\pi:M\to \Delta'$ is a submetry as well, and the preimages of the composition define a new transnormal system on $M$.
The goal of this section is to employ this method, introduced by A. Lytchak, to produce new singular Riemannian foliations on spheres out of the Clifford foliations.

Fix a Clifford system $C$ of rank $m+1$ on $\RR^{2l}$. From Proposition \ref{FC-transnormal} we know that the leaf space of a Clifford foliation is isometric to either ${1\over 2}\sphere_C$ (i.e., $\sphere_C$ with a metric of constant curvature $4$) or $\DD_C$ with a hemisphere metric of curvature $4$. With such a metric, $\DD_C$ can be described metrically as a spherical join ${1\over 2}(\sphere_C\star \{pt\})$, where the factor ${1\over 2}$ in front denotes a rescaling of the metric by a factor $1\over 2$.

Let $(\sphere_C,\fol_0)$ be a closed transnormal system on $\sphere_C$, with leaf space $\Delta$ and projection $\pi_0:\sphere_C\to \Delta$. If $l=m$ the composition $\pi_0\circ \pi_C$ gives a submetry $\sphere^{2l-1}\to {1\over 2}\Delta$. If $l>m+1$, the submetry $\pi_0:\sphere_C\to \Delta$ induces a submetry
\[
\hat{\pi}_0:\textstyle\frac{1}{2}(\sphere_C\star\{pt\})\to {1\over 2}(\Delta\star\{pt\}).
\]
Composing $\hat{\pi}_0$ with $\pi_C:\sphere^{2l-1}\to {1\over 2}(\sphere_C\star\{pt\})$, we again obtain a submetry $\hat{\pi}_{0}\circ\pi_C:\sphere^{2l-1}\to {1\over 2}(\Delta\star\{pt\})$.

In either case, we obtain a submetry $\sphere^{2l-1}\to \ul{\Delta}$, where $\ul{\Delta}={1\over 2}\Delta$ or ${1\over 2}(\Delta\star\{pt\})$, and the fibers of this submetry are by construction the leaves of a transnormal system on $\sphere^{2l-1}$, which we denote by $\fol_0\circ \fol_C$.

The goal of this section is to prove the following result.

\begin{prop}\label{P:SRF}
If $(\sphere^{2l-1},\fol_C)$ is a Clifford foliation and $(\sphere_C,\fol_0)$ is a singular Riemannian foliation, then $\fol_0\circ \fol_C$ is a singular Riemannian foliation as well.
\end{prop}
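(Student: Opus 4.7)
The transnormality of $\fol_0\circ\fol_C$ is immediate from its construction as the fiber decomposition of the composed submetry $\hat\pi_0\circ\pi_C$ (or $\pi_0\circ\pi_C$ when $l=m$): the fibers of any submetry form a transnormal system, because horizontal geodesics project to geodesics in the quotient and being normal to a fiber is equivalent to having horizontal velocity. The content of the proposition is therefore the smoothness condition, namely exhibiting, at every $x\in\sphere^{2l-1}$, a family of smooth vector fields on $\sphere^{2l-1}$ whose values at $x$ span the tangent space $T_x L_x$ to the leaf $L_x$ of $\fol_0\circ\fol_C$ through $x$. Note that specialising $\fol_0$ to the point foliation reduces to the statement that $\fol_C$ itself is a singular Riemannian foliation, so no prior smoothness for $\fol_C$ can be assumed.

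I would build the required vector fields in two families. The \emph{horizontal family} is obtained by choosing smooth vector fields $\{Y_\alpha\}$ on $\sphere_C$ spanning $\fol_0$, extending each $Y_\alpha$ to a smooth vector field $\tilde Y_\alpha$ on $\DD_C$ tangent to the cones over the $\fol_0$-leaves (together with the radial generator of $\DD_C$, which is tangent to $\fol_0^h$ by dilation invariance), and then horizontally lifting through $\pi_C$ on the regular stratum. The \emph{vertical family} must be made of smooth vector fields on $\sphere^{2l-1}$ tangent to every fiber of $\pi_C$. A first supply is provided by the antisymmetric centralizer $\mathfrak{h}=\{A\in\mathfrak{so}(2l)\st [A,P_i]=0\ \forall i\}$: for $A\in\mathfrak{h}$ the linear field $x\mapsto Ax$ is tangent to all fibers, because
\[
\scal{P_ix,Ax}=-\scal{AP_ix,x}=-\scal{P_iAx,x}=-\scal{Ax,P_ix}
\]
forces $\scal{P_ix,Ax}=0$ for every $i$. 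In the homogeneous cases of Theorem \ref{T:CliffordFoliations}(2), $\mathfrak{h}$ acts on $\sphere^{2l-1}$ with orbits equal to the fibers of $\pi_C$ and the centralizer family alone already spans $V_x$; in the remaining cases one must enlarge $\mathfrak{h}$ by higher degree polynomial vector fields—for example of the form $x\mapsto \sum c_{ij}(x)[P_i,B]\,x$ with $\pi_C$-basic coefficients $c_{ij}$ chosen so that each summand is vertical at every point.

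The main obstacle is verifying the spanning property \emph{uniformly}, in particular across the singular strata $M_+$ and $M_-$, where the rank of both the vertical distribution of $\pi_C$ and of the horizontal lifts drop, and where the naïve constructions above may degenerate. Here the crucial tool is the $\Spin(m+1)$-equivariance of $\pi_C$ established in Section \ref{SS:Symmetries-Clifford-Foliations}: the stabilizer of a point of $\DD_C$ in $\Spin(m+1)$ acts on the corresponding fiber of $\pi_C$, preserving both the leaf $L_x$ and its tangent space at every $x\in M_\pm$. Combined with the explicit polynomial form of $\pi_C$, this equivariance allows one to extend smooth frames for $T L$ on the regular stratum continuously—and hence, by polynomiality, smoothly—across $M_+$ and $M_-$, completing the verification that $\fol_0\circ\fol_C$ is a singular Riemannian foliation.
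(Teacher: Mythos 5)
Your high-level plan---transnormality from the submetry structure plus explicit spanning vector fields---identifies the right thing to prove, and the observation that specialising to the point foliation means no prior smoothness for $\fol_C$ may be used is astute. But the route you take diverges sharply from the paper's, and the hard steps remain gaps.

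The paper does \emph{not} construct a global frame. It makes three soft observations: (a) on $\sphere^{2l-1}\setminus M_-$ the map $\pi_C$ is a Riemannian submersion onto $\mathring\DD_C$, so $\fol_0\circ\fol_C$ there is the pullback of the singular Riemannian foliation $\fol_0^h$ under a Riemannian submersion and hence automatically a singular Riemannian foliation (vertical fields come from local submersion charts, horizontal ones from lifting a spanning family for $\fol_0^h$); (b) the restriction of $\fol_0\circ\fol_C$ to $M_-$ is likewise the pullback of $\fol_0$ under the Riemannian submersion $\pi_C|_{M_-}:M_-\to\sphere_C$; and (c) in a tubular neighbourhood of a chart $U\subset M_-$, the normal exponential trivialisation carries $\fol_0\circ\fol_C$ to the product foliation $\fol|_U\times\fol_{\DD^r}$, where $\fol_{\DD^r}$ is the concentric-sphere foliation of a disk. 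Since a product of singular foliations is a singular foliation, this yields smoothness across $M_-$. Note that $M_+$ requires no special care: $\pi_C$ is a Riemannian submersion there, so $M_+$ falls under (a). Only the locus $M_-$, where $\pi_C$ degenerates, needs the local product argument.

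Your vertical family from the antisymmetric centraliser $\mathfrak{h}$ is correct as far as it goes, but it is hopelessly small in the cases of interest. For $C_{9,1}$ on $\RR^{32}$ the elements $P_0,\ldots,P_9$ generate all of $M_{32}(\RR)$ (since $C\ell_{10,0}\cong M_{32}(\RR)$ is simple and the module is irreducible), so by Schur's lemma the centraliser is $\RR\cdot\mathrm{Id}$ and $\mathfrak{h}=0$; yet the fibres of $\pi_C$ over $\mathring\DD_C$ are $21$-dimensional. Your proposed remedy, fields of the form $x\mapsto\sum c_{ij}(x)[P_i,B]x$, is not a construction: you do not specify the coefficients, verify that such fields are tangent to $\sphere^{2l-1}$, prove verticality, or address spanning. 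Finally, the claim that $\Spin(m+1)$-equivariance together with the polynomiality of $\pi_C$ promotes a continuous extension of a frame across $M_\pm$ to a smooth one is not a valid inference: equivariance constrains admissible frames but does not produce smoothness, and the map $\pi_C$ being polynomial says nothing about regularity of an unspecified family of vector fields. This is precisely the step where the paper's tubular-neighbourhood product structure does the work, and your proposal contains no substitute for it.
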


Once again we prove the result in the case where the quotient is $\DD_C$, the other case being essentially contained in this one. 

The foliation $\fol_0\circ \fol_C$ can be described in the following equivalent way. The singular Riemannian foliation $(\sphere_C,\fol_0)$ can be extended to a singular Riemannian foliation $\fol_0^h$ on $\DD_C$, by defining the leaf $L_{tP}$ through $tP$ as $t\cdot L_P$, where $P\in \sphere_C$ and $t\in [0,1]$. The foliation $\fol_0\circ\fol_C$ is then given by the preimages under $\pi_C:\sphere^{2l-1}\to \DD_C$ of the leaves in $\fol_0^h$.

Let $\mathring \DD_C$ denote the interior of $\DD_C$. Since $(\mathring \DD_C,\fol_0^h)$ is a singular Riemannian foliation and $\pi_C:\sphere^{2l-1}\setminus M_-\to \mathring \DD_C$ is a Riemannian submersion, in particular $\fol_0\circ\fol_C$ is a singular Riemannian foliation on $\sphere^{2l-1}\setminus M_-$. Similarly, since $(\sphere_C,\fol_0)$ is a singular Riemannian foliation and $\pi_C|_{M_-}:M_-\to \sphere_C$ is a Riemannian submersion, the restriction $(M_-,(\fol_0\circ\fol_C)\big|_{M_-})$ is again a singular Riemannian foliation.

What we are left to prove, is that for every point $x\in M_-$ there exists a neighbourhood of $x$ in $\sphere^{2l-1}$ in which the restriction of $\fol=\fol_0\circ\fol_C$ is a singular foliation. 
\begin{prop}
Consider a neighborhood $U\In M_-$ of a point $x\in M_-$, small enough that $\nu(M_-)|_U$ admits an orthonormal frame $\{\xi_1,\ldots \xi_{r}\}$, $r=\codim(M_-)$. Then the trivialization
\begin{align*}
\rho: U\times \DD^{r}(\epsilon)&\lra Tub_{\epsilon}(U)\\
(x,(a_1,\ldots a_r))&\lmt \exp_x\left(\sum a_i\xi_i(x)\right)
\end{align*}
is a diffeomorphism, and $\rho^*(\fol|_{Tub_\epsilon(U)})= \fol|_{U}\times \fol_{\DD^r}$, where $(\DD^r(\epsilon),\fol_{\DD^r})$ is the foliation by concentric spheres around the origin. In particular, $\fol|_{Tub_\epsilon(U)}$ is a singular foliation around $M_-$.
\end{prop}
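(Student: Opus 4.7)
The plan is straightforward once $\nu(M_-)$ is identified. That $\rho$ is a diffeomorphism for $\epsilon$ small is the standard tubular neighborhood theorem applied to $U\subset M_-$ together with the fact that $\{\xi_i\}$ trivializes $\nu(M_-)|_U$. The substantive content is matching the two foliations, which I will reduce to an explicit formula for $\pi_C \circ \rho$.

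First I would identify $\nu(M_-)|_x$ at a point $x\in M_-$ with $\pi_C(x)=P$. Differentiating $\pi_C(y)=\sum_i\scal{P_iy,y}P_i$ gives $d\pi_C(v)=2\sum_i \scal{P_ix,v}P_i$, from which one checks that $T_xM_-$ splits as $T_xE_+^1(P)\oplus \mathrm{span}(P_1x,\ldots,P_mx)\subset E_+(P)\oplus E_-(P)$: the first summand is tangent to the fiber of $\pi_C|_{M_-}$, the second is the horizontal lift of $T_P\sphere_C$. Hence $\nu(M_-)|_x$ is precisely the subspace $V_x\subset E_-(P)$ from item (4) of the previous proposition, of dimension $l-m=r$.

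The key computational step is the formula for $\pi_C\circ \rho$. Using the geodesic formula from Proposition \ref{FC-transnormal}, for a unit vector $v\in V_x$ we have
\[
\pi_C\bigl(\cos(t)x+\sin(t)v\bigr)=\cos(2t)P+\sin(2t)Q, \qquad Q=\sum_i\scal{P_ix,v}P_i.
\]
The crucial point is that $v\in V_x$ is orthogonal to each $P_ix$ for $i\geq 1$ and also to $x=P_0x$, so $Q=0$. Consequently, for $v\in V_x$ with $|v|<\epsilon$,
\[
\pi_C\bigl(\exp_x(v)\bigr)=\cos(2|v|)\,\pi_C(x), \qquad \text{hence} \qquad \pi_C\bigl(\rho(x,a)\bigr)=\cos(2|a|)\,\pi_C(x),
\]
with $|a|=(\sum_i a_i^2)^{1/2}$. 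This cancellation is the only place where computation is essential; geometrically it says that the normal directions to $M_-$ in $\sphere^{2l-1}$ project to the radial directions in $\DD_C$.

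With this formula the identification follows at once. The leaves of $\fol_0^h$ on $\DD_C$ have the form $t\cdot L_{P'}$ for $P'\in \sphere_C$, $t\in[0,1]$, so two points $\rho(x,a),\rho(y,b)\in\Tub_\epsilon(U)$ lie in the same leaf of $\fol|_{\Tub_\epsilon(U)}$ iff $|a|=|b|$ (since $s\mapsto \cos(2s)$ is injective for $\epsilon$ small) and $\pi_C(x),\pi_C(y)$ lie in the same leaf of $\fol_0$, i.e.\ $x,y$ lie in the same leaf of $\fol|_{M_-}$. This is exactly the product foliation $\fol|_U\times \fol_{\DD^r}$. Finally, to promote this transnormal description to a singular foliation one pulls back via the diffeomorphism $\rho$ the smooth spanning vector fields of $\fol|_{M_-}$ (which exist because $\fol|_{M_-}$ is already known to be a singular Riemannian foliation) together with the rotational vector fields generating $\fol_{\DD^r}$, completing the proof.
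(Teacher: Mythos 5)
Your proof is correct, but it takes a route that is genuinely different from the paper's. The paper never computes $\pi_C\circ\rho$ explicitly; instead it introduces the auxiliary space $U\times[0,\epsilon]$ carrying the foliation $\fol|_U\times\{\mathrm{pts}\}$, shows that both the radius map $(id,r):U\times\DD^r(\epsilon)\to U\times[0,\epsilon]$ and the metric-projection/distance map $(\mathsf{p},d_U):\Tub_\epsilon(U)\to U\times[0,\epsilon]$ induce bijections on leaf spaces, and then observes that $(\mathsf{p},d_U)\circ\rho=(id,r)$ by the tubular neighborhood theorem. The paper's identification of leaves uses the parametrization $M_{([P],t)}=\{\cos(t)x+\sin(t)Qx\mid x\in M_+,\ Q\in L_P\}$ issuing from the \emph{inner} focal manifold $M_+$. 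You instead parametrize from $M_-$ via the normal exponential, first identifying $\nu_x(M_-)$ with the space $V_x\subset E_-(P_0)$ (the orthogonal complement of $\mathrm{span}(P_1x,\ldots,P_mx)$), and then noting that precisely because of this orthogonality the $Q$-term in the geodesic-projection formula of Proposition \ref{FC-transnormal} vanishes, yielding the clean identity $\pi_C(\rho(x,a))=\cos(2|a|)\pi_C(x)$. That explicit formula immediately encodes both that the distance to $M_-$ is $|a|$ and that the metric projection is $x$, so it combines into one line what the paper obtains from two separate leaf-space bijections; your version is more computational but also more self-contained, since it does not rely on the leaf description through $M_+$ or on general facts about metric projection onto transnormal leaves. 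One small slip: in the last sentence you say one ``pulls back'' the spanning vector fields via $\rho$; since the known smooth spanning fields live on the product $U\times\DD^r$ and one needs them on $\Tub_\epsilon(U)$, you should push them forward by the diffeomorphism $\rho$ (equivalently, pull back by $\rho^{-1}$). This is only a wording issue and does not affect correctness.
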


\begin{proof}
%(1) It is an easy computation to check that $\phi$ is indeed a diffeomorphism. Moreover, by Lemma \ref{new definitions} the preimage of a general leaf $\{\cos(t)y + \sin(t)Qy|\; y\in M_+, Q \in L_P \}$ is
%\[
%\{(y,tQ)| y\in M_+, Q\in L_P\}=M_+\times t\cdot L_{P}
%\]
%as we wanted to show.
%
%(2) It is enough to prove that
%\[
%\psi(L\times\{\cos(2t)P+\sin(2t)Q\})=E_+(\cos(2t)P+\sin(2t)Q)
%\]
%so that $\pi_C\circ \psi: L\times Tub_{2\epsilon}(P\In \sphere_C)\to Tub_{2\epsilon}(P\In \sphere_C)$ corresponds to the projection onto the second factor. But in fact, given $x\in L$, we have
%\[
%\psi(x,\cos(2t)P+\sin(2t)Q)=\cos(t)x+\sin(t) Qx,
%\]
%and
%\begin{align*}
%\big(\cos(2t)P+&\sin(2t)Q\big)(\cos(t)x+\sin(t) Qx)=\\
%=&(\cos(2t)\cos(t)+\sin(2t)\sin(t))x+(\sin(2t)\cos(t)-\cos(2t)\sin(t))x\\
%=&\cos(t)x+\sin(t) Qx.
%\end{align*}

It is clear that $\rho$ is a diffeomorphism. We will now show that $\rho$ induces a bijection among the leaf spaces.

Consider $U\times [0,\epsilon]$, together with the foliation $\fol|_U\times \{pts\}$. The map
\begin{align*}
(id, r): (U\times \DD^r(\epsilon),\fol|_U\times \fol_{\DD^r})& \lra (U\times [0,\epsilon],\fol|_U\times\{pts\})\\
(u,v)&\lra (u,\|v\|)
\end{align*}
clearly induces a bijection among leaf spaces. Moreover, let $\mathsf{p}:\Tub_{\epsilon}(U)\to U$ denote the metric projection, and consider the map
\begin{align*}
(\mathsf{p}, d_U): (\Tub_{\epsilon}(U),\fol)& \lra (U\times [0,\epsilon],\fol|_U\times\{pts\})\\
x&\lra (\mathsf{p}(x), dist(x,U))
\end{align*}
This map takes the leaf $M_{([P],t)}=\{\cos(t)x+\sin(t)Qx|\; x\in M_+,\, Q\in L_P\}$ of $\fol$ to the leaf $L_{P}\times\{\pi/4 - t\}$. Since every leaf of $\fol$ in $\Tub_{\epsilon}(U)$ is uniquely determined by $[P]\in U/\fol|_U$ and $t\in [\pi/4-\epsilon, \pi/4]$, it follows that $(\mathsf{p},d_U)$ induces a bijection between the leaf spaces as well.

Finally, we claim that $(\mathsf{p}, d_U)\circ \rho=(id, r)$.
\begin{align*}
(\mathsf{p},d_U)\Big(\rho\big(u, (a_1,\ldots a_r)\big)\Big)&= (\mathsf{p},d_U)\left(\exp_{u}\sum a_i\xi_i(u)\right)\\
&= \left(\mathsf{p}\left(\exp_{u}\sum a_i\xi_i(u)\right), dist\left(\exp_{u}\sum a_i\xi_i(u), U\right)\right)\\
&= \big(u, \|(a_1,\ldots a_r)\|\big)
\end{align*}
In particular, $\rho$ induces a bijection between the leaf spaces as well, and this finishes the proof.
\end{proof}

\begin{rem}
If $\fol_0$ is a trivial foliation whose leaves consist of points, $\fol_0\circ \fol_C=\fol_C$ and in particular, $\fol_C$ is a singular Riemannian foliation. Moreover, when $C$ is a Clifford system of rank $m+1$ on $\RR^{2l}$ and $l=m+1$, the foliation $\fol_C$ given by the (non connected) fibers of $\pi_C$ is a \emph{singular Riemannian foliation with disconnected fibers} as defined in \cite[Sect. 3]{AR}.
\end{rem}

%
% RIGIDITY
%

\section{Rigidity of Clifford foliations}\label{s:rigidity}
In the FKM families, the map that associates an isoparametric foliation to each (geometric equivalence class of) Clifford system is neither injective, nor surjective. In fact, on the one hand there are examples of geometrically distinct Clifford systems giving rise to the same isoparametric foliation. On the other hand, there are isoparametric foliations that do not come from a Clifford algebra, whose quotient is isometric to that of the FKM examples.

The goal of this section is to prove that in our case, the map $C\mapsto \fol_C$ described in the previous sections does determine a bijection between the geometric equivalence classes of Clifford system, and the congruence classes of singular Riemannian foliations in spheres whose quotient is a sphere or hemisphere of curvature $4$. We will prove this in the next two propositions.
\begin{prop}\label{P:rigidity}
Suppose $(\sphere^n, \fol)$ is a singular Riemannian foliation such that the quotient space is a hemisphere ${1\over 2}\DD^{m+1}$ of constant curvature $4$. Then $\fol=\fol_C$ for some Clifford system $C$.
\end{prop}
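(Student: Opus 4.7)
My plan is to invert the construction $C\mapsto\fol_C$: from the abstract foliation $(\sphere^n,\fol)$ I will produce $m+1$ symmetric operators on $\RR^{n+1}$ that form a Clifford system $C$ with $\fol=\fol_C$ (so that, in particular, $n+1$ is automatically even). Identify the quotient ${1\over 2}\DD^{m+1}$ isometrically with the closed unit disk in $\RR^{m+1}$ equipped with the hemisphere-of-curvature-$4$ metric, so that the boundary is the unit $m$-sphere and the center is the origin; this is the same model as $\DD_C$. Composing with $\pi:\sphere^n\to{1\over 2}\DD^{m+1}$ yields coordinate functions $f_i:\sphere^n\to\RR$ with $\pi(x)=(f_0(x),\ldots,f_m(x))$ and $\sum_i f_i^2\le 1$, with equality exactly on $M_-=\pi^{-1}(\partial\DD)$; the focal set $M_+=\pi^{-1}(0)$ is cut out by $f_0=\cdots=f_m=0$.

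The main technical step is to show that each $f_i$ extends to a symmetric quadratic form $\langle P_i\,\cdot\,,\cdot\rangle$ on $\RR^{n+1}$. Since $\fol$ is transnormal, horizontal geodesics in $\sphere^n$ project to unit-speed geodesics of the hemispherical quotient, which in our coordinates take the explicit form $\beta(t)=-\cos(2t)P+\sin(2t)Q$ for a unit $P\in\sphere^m$ and $Q\perp P$ with $\|Q\|\le 1$ (the same formula used in Proposition \ref{FC-transnormal} for the Clifford case). Consequently, along any horizontal geodesic $\gamma$ in $\sphere^n$, each $f_i(\gamma(t))$ is a second-harmonic trigonometric polynomial. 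Specializing to horizontal geodesics $\gamma(t)=\cos(t)x+\sin(t)v$ issuing from $x\in M_+$ in a horizontal unit direction $v$, the projected geodesic is radial, $\pi(\gamma(t))=\sin(2t)Q(v)$ for some $Q(v)\in\sphere^m$; in particular $v=\gamma(\pi/2)\in M_+$, and $f_i(\gamma(t))=2\sin t\cos t\cdot Q_i(v)$ is precisely the restriction to $\mathrm{span}(x,v)$ of a quadratic form vanishing on $x$ and on $v$. Since such horizontal planes span $\RR^{n+1}$ as $x$ varies over $M_+$, and since the $\SO(m+1)$-symmetry of the quotient permutes the $f_i$ via an orthogonal representation, the $2$-homogeneous extension $\tilde f_i(w)=\|w\|^2 f_i(w/\|w\|)$ assembles into a globally defined quadratic form $\tilde f_i(w)=\langle P_i w,w\rangle$ with $P_i\in\End(\RR^{n+1})$ symmetric.

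To verify the Clifford relations, for each unit $a\in\sphere^m$ set $P_a=\sum_i a_i P_i$ and $f_a=\sum_i a_i f_i=\langle P_a\,\cdot\,,\cdot\rangle|_{\sphere^n}$. Since $f_a(x)=\langle\pi(x),a\rangle$ factors through $\pi$, critical points of $f_a$ on $\sphere^n$ project to critical points of the linear function $P\mapsto\langle P,a\rangle$ on the curvature-$4$ hemisphere; this linear function has no interior critical points and exactly two boundary critical points $\pm a$, with values $\pm 1$. Because the eigenvalues of the symmetric operator $P_a$ coincide with the critical values of $f_a$ on $\sphere^n$, they lie in $\{-1,+1\}$, so $P_a^2=\mathrm{Id}$ for every unit $a$. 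Polarizing via $(P_i+P_j)^2=2\,\mathrm{Id}$ gives $P_i^2=\mathrm{Id}$ and $P_iP_j+P_jP_i=0$ for $i\ne j$, so $C=(P_0,\ldots,P_m)$ is a Clifford system. By construction $\pi_C(x)=(\langle P_0 x,x\rangle,\ldots,\langle P_m x,x\rangle)=\pi(x)$, whence $\fol=\fol_C$. The principal obstacle I foresee is the regularity step — upgrading the second-harmonic behavior of $f_i$ along the restricted family of horizontal great circles through $M_+$ to the global polynomiality of $\tilde f_i$ — which demands careful use of the $\SO(m+1)$-equivariance of the quotient together with the focal structure of $M_+$ to assemble the per-plane data into a single quadratic form on $\RR^{n+1}$.
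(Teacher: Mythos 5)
Your strategy is genuinely different from the paper's, and the gap you flag at the end is a real one; the argument as written does not close. The crux of your plan is that each coordinate function $f_i=\langle\pi(\cdot),e_i\rangle$ extends to a quadratic form on $\RR^{n+1}$. What you actually establish is second-harmonic behaviour of $f_i$ along horizontal great circles, and since every horizontal geodesic passes through $M_+$ this is only an $(n-1)$-dimensional family inside the $2(n-1)$-dimensional space of all great circles; that restricted behaviour does not by itself force a global quadratic extension. The appeal to ``$\SO(m+1)$-equivariance of the quotient'' cannot rescue the step, because that symmetry lives only downstairs: it need not lift to $(\sphere^n,\fol)$, and even in the model Clifford case only $\Spin(m+1)$ acts upstairs.

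The paper closes exactly this gap with an idea your sketch misses. Rather than trying to establish polynomiality of $f_i$ directly, fix a boundary point $p_i$ and pull back the distance spheres around $\pm p_i$ to a codimension-one transnormal system $\fol^*$ on $\sphere^n$ with quotient an interval of length $\pi/2$. Cartan's classification of such codimension-one foliations then forces the singular leaves $\pi^{-1}(\pm p_i)$ to be a complementary pair of totally geodesic great subspheres of equal dimension. This immediately gives the orthogonal splitting $\RR^{n+1}=V_+(p_i)\oplus V_-(p_i)$, the operator $P_i$ is simply the reflection $2\Pi_{V_+(p_i)}-\mathrm{Id}$, and the identity $f_i=\langle P_i\cdot,\cdot\rangle$ drops out automatically because the spherical distance to a great subsphere is an explicit function of a quadratic form. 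The anticommutation $P_iP_j=-P_jP_i$ is then shown by tracking a single horizontal geodesic, tangent to $M_-$, through $\pi^{-1}(p_0)$, $\pi^{-1}(p_1)$, $\pi^{-1}(-p_0)$ and applying the linear map $P_0$ to it. Your spectral observation --- that the eigenvalues of $P_a$ must be $\pm1$ and that polarization yields the Clifford relations --- is a clean alternative to this last step, but it only becomes available once the $P_i$ are known to exist, which is precisely what the Cartan input supplies and what your sketch cannot.
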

\begin{proof}
Consider the boundary ${1\over 2}\sphere^{m}$ of ${1\over 2}\DD^{m+1}$. Take an \emph{orthonormal basis} of ${1\over 2}\sphere^m$, i.e. $m+1$ points $p_0,\ldots p_m\in {1\over 2}\sphere^m$ mutually at distance $\pi/4$. Given a point $p_i$, the partition of ${1\over 2}\DD^{m+1}$ into the distance spheres around $p_i$ and $-p_i$ lifts via $\pi:\sphere^n\to {1\over 2}\DD^{m+1}$ to a codimension $1$ foliation $\fol^*$ of $\sphere^n$ whose quotient is an interval of length $\pi/2$. By Cartan's classification of such foliations, it follows that the singular leaves of $\fol^*$, i.e. the leaves of $\fol$ corresponding to ${\pm p_i}$, are totally geodesic subspheres of $\sphere^n$, and since they lie on the same stratum they must have the same dimension, call it $l$. In particular, $n=2l-1$ and $\RR^{n+1}=\RR^{2l}$ splits orthogonally as $V_+(p_i)\oplus V_-(p_i)$, where $V_{\pm}(p_i)$ is the space containing the great sphere $\pi^{-1}(\pm p_i)$. Define a linear map $P_i\in \Sym^2(\RR^{2l})$ by
\[
P_i|_{V_+(p_i)}=id,\qquad P_i|_{V_-(p_i)}=-id.
\]
Notice that by definition $P_i^2=id$ and $E_{\pm}(P_i)=V_{\pm}(p_i)$. This produces maps $(P_0,\ldots, P_m)\in \Sym^2(\RR^{2l})$. In order to conclude the proof, it will be enough to prove that $P_iP_j=-P_jP_i$ for $i\neq j$, or equivalently, that $P_i(E_{\pm}(P_j))=E_{\mp}(P_j)$.

It is enough to show that $P_0(E_{+}(P_1))=E_-(P_1)$. Take a point $x\in E_+(P_0)$ in the preimage of $p_0$, and take a horizontal geodesic $\gamma$ starting at $x$ and tangent to the singular stratum, such that $\pi(\gamma)$ passes through $p_1$. Since $\pi(\gamma)(\pi/2)=-p_0$, the point $y=\gamma(\pi/2)$ belongs to $E_-(P_0)$ and we can write $\gamma(t)=\cos(t)x+\sin(t)y$. Moreover, $w=\gamma(\pi/4)={\sqrt{2}\over 2}x+{\sqrt{2}\over 2}y$ belongs to $E_+(P_1)$ by construction of $\gamma$. Then
\[
P_0(w)=P_0\left({\sqrt{2}\over 2}x+{\sqrt{2}\over 2}y\right)={\sqrt{2}\over 2}x-{\sqrt{2}\over 2}y=\gamma(-\pi/4)
\]
But $\pi(\gamma)(-\pi/4)=-p_1$, that means $P_0(w)\in E_-(P_1)$.

Since any $w\in E_+^1(P_0)$ can be written as $\gamma(\pi/4)$ for some horizontal geodesic $\gamma$ from $E^1_+(P_0)$ and $E_-^1(P_0)$, we obtain that $P_0(E_+(P_1))\In E_-(P_1)$. Since $P_0$ is nonsingular, by dimensional reasons it must be $P_0(E_+(P_0))=E_-(P_0)$ and this finishes the proof.
\end{proof}

\begin{prop}\label{P:distinguish}
The Clifford foliations $(\sphere^{2l-1},\fol_C)$ distinguish the geometric equivalence classes of Clifford systems. In other words, if $C$ and $C'$ are geometrically inequivalent Clifford systems on $\RR^{2l}$ and $\RR^{2l'}$ respectively, then there are no foliated isometries between $(\sphere^{2l-1},\fol_C)$ and $(\sphere^{2l'-1},\fol_{C'})$.
\end{prop}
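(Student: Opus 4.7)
The plan is to show that any foliated isometry $A\colon (\sphere^{2l-1},\fol_C)\to (\sphere^{2l'-1},\fol_{C'})$ produces an orthogonal transformation conjugating $\RR_C$ onto $\RR_{C'}$, thereby exhibiting a geometric equivalence between $C$ and $C'$ and contradicting the hypothesis. Since $A$ is an isometry between round spheres, we have $l=l'$ and $A$ is (the restriction of) an orthogonal map in $\On(2l)$. Being foliated, $A$ descends to an isometry $\bar A$ of the leaf spaces; in particular the rank $m+1$ must agree for $C$ and $C'$. By restricting $\bar A$ to the boundary in the hemisphere case (or taking $\bar A$ itself in the sphere case $l=m$), we obtain an isometry of curvature-$4$ round spheres $\tfrac12\sphere_C\to \tfrac12\sphere_{C'}$, which is induced by a unique linear orthogonal map $B\colon \RR_C\to \RR_{C'}$.

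Next, fix an orthonormal basis $(P_0,\ldots,P_m)$ of $\RR_C$, so that each $P_i$ is a point of $\sphere_C$, and set $Q_i=B(P_i)$. Then $(Q_0,\ldots,Q_m)$ is an orthonormal basis of $\RR_{C'}$. By the structural proposition for $\pi_C$ in Section \ref{s:construction}, the preimage $\pi_C^{-1}(\pm P_i)$ is exactly the unit sphere $E_\pm^1(P_i)$ in the $(\pm 1)$-eigenspace of $P_i$, and similarly $\pi_{C'}^{-1}(\pm Q_i)=E_\pm^1(Q_i)$. Since $A$ covers $\bar A$ and sends leaves to leaves, it carries $\pi_C^{-1}(\pm P_i)$ onto $\pi_{C'}^{-1}(\pm Q_i)$; hence $A\cdot E_\pm(P_i)=E_\pm(Q_i)$ as linear subspaces of $\RR^{2l}$.

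To conclude: the symmetric involution $AP_iA^{-1}$ has $(\pm 1)$-eigenspaces $A\cdot E_\pm(P_i)=E_\pm(Q_i)$, so it coincides with $Q_i$. Therefore $A\RR_C A^{-1}\supseteq\mathrm{span}(Q_0,\ldots,Q_m)=\RR_{C'}$, and equality follows by dimension. This realises the geometric equivalence $\RR_{C'}=A\RR_CA^{-1}$, and the proposition follows.

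The main delicate point is the first step: one must check that $\bar A$ is indeed the restriction of a linear orthogonal map on $\RR_C$, and that the curvature-$4$ rescaling does not obstruct the correspondence between orthonormal bases of $\RR_C$ and $(m+1)$-tuples of points of $\sphere_C$ pairwise at intrinsic distance $\pi/4$. Everything else is essentially linear algebra, and this approach handles the sphere and hemisphere leaf-space cases uniformly.
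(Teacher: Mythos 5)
Your proof is correct, and it takes a genuinely different route from the paper's. The paper reduces Proposition~\ref{P:distinguish} to the classification of Clifford systems: after observing that a foliated isometry forces $m=m'$ and $k=k'$, it disposes of the case $m\not\equiv 0\ (\mathrm{mod}\ 4)$ by uniqueness and, for $m\equiv 0\ (\mathrm{mod}\ 4)$, invokes the FKM result (\cite[p.~486]{FKM}) that $|\tr(P_0\cdots P_m)|$ is a characteristic number of the bundle $E\to\sphere_C$ with sphere bundle $\pi_C|_{M_-}\colon M_-\to\sphere_C$, hence a congruence invariant. Your argument is more direct and self-contained: from a foliated isometry $A$ you \emph{build} the conjugating orthogonal map, showing $AP_iA^{-1}=Q_i$ by matching up eigenspaces via the fibres over $\pm P_i$, and hence $A\RR_C A^{-1}=\RR_{C'}$. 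This avoids both the classification of Clifford systems and the topological input from FKM, and it is very much in the spirit of the paper's own Proposition~\ref{P:rigidity}, which reconstructs a Clifford system from a foliation with hemispherical quotient; your proof is essentially a ``relative'' version of that reconstruction. What your approach buys is elementariness and a single uniform argument; what the paper's approach buys is that the discrepancy between geometrically inequivalent systems is pinned on a concrete numerical invariant.

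On the ``main delicate point'' you flag: it is indeed true that every isometry of the curvature-$4$ hemisphere $\DD_C$ onto $\DD_{C'}$ fixes the centre (the unique point at maximal distance $\pi/4$ from all boundary points), restricts to an isometry of the boundary round spheres $\sphere_C\to\sphere_{C'}$, and is therefore the restriction of a unique linear orthogonal map $B\colon\RR_C\to\RR_{C'}$; the full hemisphere isometry is then recovered from $B$ because each interior point lies on a unique minimizing geodesic from the centre to the boundary. Once this standard fact is spelled out, your proof is complete. One small remark: you should also note explicitly that the relation $\pi_{C'}\circ A=\bar A\circ\pi_C$ (which you use to identify $A\cdot E_\pm^1(P_i)$ with $E_\pm^1(Q_i)$) holds by the universal property of the leaf-space quotient, since $A$ carries leaves of $\fol_C$ to leaves of $\fol_{C'}$.
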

\begin{proof}
Let $(P_0,\ldots P_m)$ be an orthonormal basis of $\RR_C$ and $(Q_0,\ldots Q_{m'})$ an orthonormal basis of $\RR_{C'}$. Since the leaf spaces of $\fol_C$, $\fol_{C'}$ have dimension $m+1$, $m'+1$ respectively, it follows immediately that $\fol_C\neq \fol_{C'}$ unless $m=m'$. If $m=m'$, we have $l=k\delta(m)$, $l'=k'\delta(m')=k'\delta(m)$ and therefore $\fol_C\neq \fol_{C'}$ unless $k=k'$ as well.

Assume now that $m=m'$ and $k=k'$. As we recalled in Section \ref{S:Clifford-syst}, if $m\not\equiv 0 (\mmod 4)$ there is only one geometric class of Clifford systems for each $k$, and therefore $\fol_C=\fol_{C'}$. If $m\equiv0 (\mmod 4)$ then the geometric class of $C$ is uniquely determined by the non-negative integer $|\tr(P_0\cdot\ldots \cdot P_m)|$. Therefore the last thing remained to prove is that $\fol_C$ and $\fol_{C'}$ are not congruent unless $|\tr(P_0\cdot\ldots \cdot P_m)|=|\tr(Q_0\cdot\ldots \cdot Q_m)|$. This was already established in \cite[page 486]{FKM}, as they showed that the invariant $|\tr(P_0\cdot\ldots \cdot P_m)|$ represents a characteristic number of the vector bundle $E\to \sphere_C$ whose sphere bundle is $\pi_C|_{M_-}:M_-\to \sphere_C$. %For the sake of completeness, we show such result here.

\end{proof}

%
% HOMOGENEOUS FOLIATIONS
%

\section{Homogeneous foliations}\label{s:homogeneous}
In this section we investigate the Clifford and composed foliations that are homogeneous.
\subsection{Clifford foliations}
When $\sphere^{2l-1}/\fol_C$ is a sphere ${1\over 2}\sphere^m$, it is known that the foliation is homogeneous if and only if $m=2$ or $4$. Therefore we can restrict our attention to the case where the quotient is a hemisphere. Our first result restricts the list of possible homogeneous Clifford foliations.
\begin{prop}\label{homogeneous}
Let $C$ be a Clifford system of rank $m+1$ on $\RR^{2l}$ such that $l>m+1$. On $\sphere^{2l-1}$ consider the Clifford foliation $\fol_C$ and the FKM isoparametric family $\fol_C'$ associated to $C$. If $\fol_C$ is homogeneous, then $\fol_C'$ is homogeneous as well.
\end{prop}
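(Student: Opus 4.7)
The plan is to enlarge the Lie group witnessing the homogeneity of $\fol_C$ by the $\Spin$-symmetries from Section \ref{SS:Symmetries-Clifford-Foliations}, and then check that the orbits of the enlarged group are exactly the leaves of the FKM foliation $\fol_C'$. Since the leaves of $\fol_C'$ are precisely the preimages $\pi_C^{-1}(S_t)$ of the concentric geodesic spheres $S_t \subseteq \DD_C$ around the origin, homogeneity of $\fol_C'$ will follow once such a group is produced.

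First I would fix a connected closed subgroup $G \subseteq \SO(2l)$ whose orbits on $\sphere^{2l-1}$ are precisely the leaves of $\fol_C$; such $G$ exists by hypothesis. Because $G$ permutes points within each fiber of $\pi_C$, it descends to the trivial action on $\DD_C$. In parallel, Section \ref{SS:Symmetries-Clifford-Foliations} provides the subgroup $K := \eta(\Spin(m+1)) \subseteq \SO(2l)$, which preserves $\fol_C$ and descends to an isometric cohomogeneity-$1$ action on $\DD_C$ whose orbits are exactly the $S_t$. Let $G' \subseteq \SO(2l)$ be the connected closed subgroup generated by $G$ and $K$; both generators preserve $\fol_C$, so $G'$ does too and descends to an isometric action on $\DD_C$.

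Next I would identify the $G'$-orbits. On $\DD_C$ the hemisphere metric has the origin as its unique point at maximal distance from the boundary, so every isometry fixes the origin and the induced $G'$-action lies in $\On(m+1)$ preserving the family $\{S_t\}$. Since $K \subseteq G'$ already acts transitively on each $S_t$, the $G'$-orbits on $\DD_C$ coincide with the $S_t$. Lifting back: for $x \in \sphere^{2l-1}$ with $\pi_C(x) = p \in S_t$, the $\pi_C$-image of $G' \cdot x$ is $S_t$, so $G' \cdot x \subseteq \pi_C^{-1}(S_t)$. Conversely, any $y \in \pi_C^{-1}(S_t)$ can be moved by some element of $K$ into $\pi_C^{-1}(p) = G \cdot x$, where the last equality uses homogeneity of $\fol_C$; hence $y \in G' \cdot x$. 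Thus $G' \cdot x = \pi_C^{-1}(S_t)$ is precisely the leaf of $\fol_C'$ through $x$, and $\fol_C'$ is homogeneous.

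The main obstacle is ruling out that $G'$ could descend to a transitive (or otherwise non cohomogeneity-$1$) action on $\DD_C$; this is handled by the rigidity of the hemisphere metric, which forces every foliated isometry of $\fol_C$ to sit inside $\On(m+1)$ with the origin as a common fixed point. Once the quotient action is pinned down to be the radial one, the identity $\pi_C^{-1}(p) = G \cdot x$ supplied by the homogeneity of $\fol_C$ fills in the fibers and closes the argument.
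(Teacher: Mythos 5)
Your argument is correct and follows the same route as the paper's proof: enlarge the group $G$ realizing $\fol_C$ by the $\Spin(m+1)$-symmetries $\eta(\Spin(m+1))$ from Section \ref{SS:Symmetries-Clifford-Foliations}, observe that the resulting group acts by foliated isometries and descends to a cohomogeneity-one action on $\DD_C$, and conclude that its orbits on $\sphere^{2l-1}$ are the $\pi_C$-preimages of concentric spheres, i.e.\ the FKM leaves. The only difference is cosmetic: the paper states flatly that the descended action has cohomogeneity one, whereas you supply the small justification (every isometry of the hemisphere fixes the unique interior point of maximal distance from the boundary, hence preserves the distance spheres $S_t$, and $\Spin(m+1)$ already acts transitively on each $S_t$) — a useful detail, but not a different argument.
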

\begin{proof} 
Suppose that $(\sphere^{2l-1},\fol_C)$ is given by an isometric action of some Lie group $H\In \SO(2l)$. Let $G\In \SO(2l)$ be the closure of the group generated by $H$ and the image of the spin representation $\eta:\Spin(m+1)\to \SO(2l)$ defined in Section \ref{SS:Symmetries-Clifford-Foliations}. Since both $H$ and $\Spin(m+1)$ act by foliated isometries on $(\sphere^{2l-1}, \fol_C)$, so does  $G$. Moreover, the $G$ action descends to a cohomogeneity $1$ action on $\DD_C$. In particular, the $G$-orbits in $\sphere^{2l-1}$ correspond to the leaves of $\fol_C'$, and therefore $\fol'_C$ is homogeneous.
\end{proof}

From Proposition \ref{homogeneous} above and the table in section \ref{constructionFKM} it follows that the only possible homogeneous Clifford foliations with $l>m+1$ come from Clifford systems with $(m,k)=(1,k), (2,k), (9,1)$,  or $m=4$ and $P_1\cdot\ldots \cdot P_4=\pm Id$.
\begin{prop}\label{P:Clifford-homog}
Let $C$ be a Clifford system of rank $m+1$ on $\RR^{2l}$, $l=k\delta(m)$. Then:
\begin{itemize}
\item If $m=1$, $\fol_C$ is given by the orbits of the diagonal $\SO(k)$-action on $\sphere^{2k-1}\In \RR^k\oplus \RR^k$.
\item If $m=2$, $\fol_C$ is given by the orbits of the diagonal $\SU(k)$-action on $\sphere^{4k-1}\In \CC^k\oplus \CC^k$.
\item If $m=4$ and $P_0\cdot P_1\cdot P_2\cdot P_3\cdot P_4=\pm Id$, $\fol_C$ is given by the orbits of the diagonal $\Sp(k)$-action on $\sphere^{8k-1}\In \HH^k\oplus \HH^k$.
\end{itemize}
\end{prop}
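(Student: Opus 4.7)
The plan is to treat each case by placing the Clifford system in an explicit normal form adapted to a real, complex, or quaternionic structure on $\RR^{2l}$, then to verify that the stated diagonal group action commutes with every element of the Clifford system (so it preserves $\pi_C$ and therefore the foliation $\fol_C$), and finally to match orbit dimensions with the dimension of a regular fiber of $\pi_C$. In all three cases, once commutativity is established, orbits are automatically contained in fibers; equality then follows from a dimension count together with the transitivity of the relevant classical group on pairs of vectors with prescribed inner-product data.

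I would begin with $m=1$. Since $1\not\equiv 0\,(\mathrm{mod}\,4)$, Section \ref{S:Clifford-syst} guarantees that any rank-$2$ Clifford system on $\RR^{2k}$ is algebraically equivalent to a direct sum of $k$ copies of the unique irreducible one. I may therefore take
\[
P_0=\begin{pmatrix}I_k & 0\\ 0 & -I_k\end{pmatrix},\qquad P_1=\begin{pmatrix}0 & I_k\\ I_k & 0\end{pmatrix}
\]
on $\RR^{2k}=\RR^k\oplus\RR^k$. A direct calculation yields $\pi_C(u,v)=(\|u\|^2-\|v\|^2,\,2\langle u,v\rangle)$. The diagonal $\SO(k)$-action $g\cdot(u,v)=(gu,gv)$ commutes with both $P_0$ and $P_1$, hence leaves $\pi_C$ invariant. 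Since pairs in $\RR^k\oplus\RR^k$ with the same Gram matrix are $\SO(k)$-equivalent (for $k\geq 2$), the orbits coincide with the level sets of $\pi_C$.

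The $m=2$ case is identical after adjoining a complex structure. I identify $\RR^{4k}=\CC^k\oplus\CC^k$, let $J$ denote multiplication by $i$, and set
\[
P_0=\begin{pmatrix}I & 0\\ 0 & -I\end{pmatrix},\qquad P_1=\begin{pmatrix}0 & I\\ I & 0\end{pmatrix},\qquad P_2=\begin{pmatrix}0 & -J\\ J & 0\end{pmatrix}.
\]
One checks that these anticommute and square to the identity, and again Section \ref{S:Clifford-syst} tells us this model represents the unique algebraic equivalence class of rank-$3$ Clifford systems on $\RR^{4k}$. The diagonal $\SU(k)$ is $\CC$-linear, hence commutes with $J$ and therefore with every $P_i$, so $\pi_C$ is $\SU(k)$-invariant. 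Comparing dimensions of orbits and fibers closes this case.

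The $m=4$ case is the delicate one and the main obstacle. Here there are \emph{two} algebraic equivalence classes of irreducible rank-$5$ Clifford systems on $\RR^8$, distinguished by the sign of $P_0P_1P_2P_3P_4$. The hypothesis $P_0P_1P_2P_3P_4=\pm\Id$ forces the Clifford module $\RR^{8k}$ to be the sum of $k$ copies of a single irreducible class. This isotypic condition equips $\RR^{8k}$ with a compatible quaternionic structure and an identification $\RR^{8k}=\HH^k\oplus\HH^k$ with respect to which each $P_i$ is right-$\HH$-linear. Indeed, the three ``imaginary units'' on $\HH$ combine with the block patterns used in the previous cases to produce a normal form for $(P_0,\ldots,P_4)$ in which the diagonal $\Sp(k)$-action acting by left multiplication visibly commutes with every $P_i$; invariance of $\pi_C$ and the matching of orbit and fiber dimensions then finish the argument. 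The bookkeeping in this last step --- extracting the quaternionic structure from the algebraic condition on $P_0\cdots P_4$ and writing down a concrete Clifford system in which $\Sp(k)$ appears as the $\HH$-linear part --- is where I expect the real work of the proof to lie.
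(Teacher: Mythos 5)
Your overall strategy (explicit normal form for the Clifford system adapted to a division algebra, commutativity with the diagonal group action, and a transitivity argument to show orbits fill out fibers) is exactly the strategy of the paper. For $m=1$ and $m=2$ the normal forms you wrote are correct, and the Gram-matrix argument you give for $m=1$ is precisely the transitivity step needed (the paper phrases it as moving any $(u,v)$ by the group to a point $(u_1 e_1, v_1 e_1 + v_2 e_2)$ and observing that $u_1,v_1,v_2$ are then determined by $\pi_C$).

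The genuine gap is that you leave the $m=4$ case undone: you correctly identify that the hypothesis $P_0P_1P_2P_3P_4=\pm\Id$ must produce a quaternionic structure, but you stop short of exhibiting the normal form, and this is where the content of this case actually lives. The paper handles all three cases at once by writing $\RR^{2l}=\FF^k\oplus\FF^k$ with $\FF\in\{\RR,\CC,\HH\}$ and setting
\[
P_0(u,v)=(u,-v),\qquad P_1(u,v)=(v,u),\qquad P_{r+1}(u,v)=(-j_r v,\,j_r u),\quad r=1,\ldots,m-1,
\]
where $j_1,\ldots,j_{m-1}$ are the imaginary units of $\FF$; $\Sp(k)$ acting $\HH$-linearly on the opposite side then commutes with every $P_i$. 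A short computation shows this model has $P_0P_1P_2P_3P_4=\Id$ (and flipping $P_0$ gives $-\Id$), and these two algebraic classes are exactly what the hypothesis singles out, since $P_0\cdots P_4=\pm\Id$ is equivalent to the module being $k$ isotypic copies of a single irreducible. Without writing this down, your proof of the third bullet is not actually a proof. A secondary point: the sentence ``Comparing dimensions of orbits and fibers closes this case'' for $m=2$ is not by itself a complete argument --- dimension equality gives only that the orbit is open in the fiber, and you would still need connectedness of the fiber (plus a density argument to handle the singular strata). Carrying out the transitivity argument as you did for $m=1$, which is also what the paper does, is cleaner and closes the loop uniformly.
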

\begin{proof} This proof is essentially a version of \cite[Theorem 6.1]{FKM}, adapted to our situation. The Clifford systems with $m=1,2$ or $m=4$ and $P_0\cdot P_1\cdot P_2\cdot P_3\cdot P_4=\pm Id$ can be obtained in the following way: let $\FF\in\{\RR,\CC,\HH\}$ be the division algebra such that $\dim_{\RR}\FF=m$, and let $j_1,\ldots j_{m-1}$ the canonical imaginary units of $\FF$. For $q=q_0+q_1j_1+\ldots q_{m-1}j_{m-1}\in \FF$, $q_i\in \RR$, we define the \emph{real part} of $q$ by $\Re(q)=q_0$ and the \emph{$r$-th imaginary part} of $q$ by $\Im_r(q)=q_r=\Re(q\cdot j_r)$, $r=1,\ldots m-1$.

On $\RR^{2\delta(m)}=\FF^k\times \FF^k$, let $C=(P_0,\ldots P_m)$ be the Clifford system given by
\begin{align*}
&P_0(u,v)=(u,-v),\quad P_1(u,v)=(v,u),\quad P_{r+1}(u,v)=(-j_r\cdot v, j_r\cdot u)\\
&r\in \{1,m-1\}
\end{align*}
where $u,v\in \FF^k$, $u=(u_1,\ldots u_k)$, $v=(v_1,\ldots v_k)$.
The projection $\pi_C$ is determined by the functions
\[
\left\{\begin{array}{rcl}
\scal{P_0(u,v), (u,v)}&=&\|u\|^2-\|v\|^2\\
\scal{P_1(u,v), (u,v)}&=&2\Re(\sum_iu_i\cdot \bar{v}_i)\\
\scal{P_{r+1}(u,v), (u,v)}&=&2\Re(\sum_iu_i\cdot \bar{v}_i\cdot j_r)=2\Im_r(\sum_iu_i\cdot \bar{v}_i)
\end{array}
\right.
\]
and thus we can write
\[
\pi_C(u,v)=(\|u\|^2-\|v\|^2, 2\sum_iu_i\cdot \bar{v}_i)\in \RR\oplus\FF.
\]
The group $\U(\FF,k)$ defined by $\U(\FF,k)=\SO(k), \SU(k)$ or $\Sp(k)$ according to whether $\FF=\RR, \CC$ or $\HH$ respectively, acts transitively on $\sphere^{mk-1}\In \FF^k$ and its diagonal action on $\FF^k\times \FF^k$ preserves the functions $f_i$, $i=0,\ldots m$. In particular, the orbits of such action are contained in the fibers of $\pi_C$, and therefore in the leaves of $\fol_C$. Moreover, any point $(u,v)\in \sphere^{2mk-1}\In \FF^k\times \FF^k$ can be moved by the $\U(\FF,k)$-action to a point of the form
\[
(u_1e_1, v_1 e_1+v_2e_2)
\]
where $e_1, e_2$ are elements of the canonical basis on $\FF^k$, $v_1\in \FF$, $u_1,v_2\in \RR_{\geq 0}$ and $u_1^2+|v_1|^2+v_2^2=1$. It is easy to see that such $u_1, v_1, v_2$ are uniquely determined by the functions $f_i$, and therefore there is only one such point for each fiber of $\pi_C$. In particular,  every point in a fiber of $\pi_C$ can be moved to a specific point via the action of $\U(\FF,k)$, and therefore the orbits of $\U(\FF,k)$ coincide with the leaves of $\fol_C$.
\end{proof}

On the other hand, the remaining foliation $\fol_C$, $C=C_{9,1}$ on $\RR^{32}$, is not homogeneous.
\begin{prop}
The Clifford foliation induced by the Clifford system $C=C_{9,1}$ on $\RR^{32}$ is not homogeneous.
\end{prop}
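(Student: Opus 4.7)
The plan is to assume for contradiction that $\fol_C$ with $C=C_{9,1}$ is homogeneous, given by the orbits of a connected Lie group $H\subseteq\SO(32)$, and derive a contradiction from the fact that the positive Clifford algebra generated by $C_{9,1}$ acts irreducibly on $\RR^{32}$. Since $l=16>m+1=10$, the fibers of $\pi_C$ are connected (as shown in Section~\ref{s:construction}), so each $H$-orbit is an entire fiber; equivalently, $\pi_C\circ h=\pi_C$ for every $h\in H$.

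The first step is to translate fiber-preservation into a commutation condition. From the defining identity \eqref{property}, $\pi_C\circ h=\pi_C$ yields $\scal{h^{-1}Phx,x}=\scal{Px,x}$ for every $x\in\sphere^{31}$ and every $P\in\RR_C$; since $h^{-1}Ph$ and $P$ are both symmetric and determine the same quadratic form, $h^{-1}Ph=P$. Hence $h$ commutes with every $P\in\RR_C$ and therefore with the whole subalgebra $\mathcal{B}\subseteq\End(\RR^{32})$ generated by the $P_i$, so $H\subseteq Z_{\SO(32)}(\mathcal{B})$.

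The next step is to identify $\mathcal{B}$. Since $C_{9,1}$ is the irreducible Clifford system of rank $10$ on $\RR^{2\delta(9)}=\RR^{32}$, the algebra $\mathcal{B}$ is a faithful irreducible representation of the positive Clifford algebra $C\ell_{10,0}$. By Bott periodicity, $C\ell_{10,0}\cong C\ell_{2,0}\otimes M_{16}(\RR)\cong M_{32}(\RR)$, so $\mathcal{B}=\End(\RR^{32})$ by the double-centralizer theorem. Consequently its commutant in $\End(\RR^{32})$ is $\RR\cdot\Id$, and intersecting with $\SO(32)$ gives $H\subseteq\{\pm\Id\}$, which is discrete. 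This contradicts the transitivity of $H$ on the regular fibers of $\pi_C$, which have positive dimension $2l-m-2=21$.

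The main subtlety is the centralizer computation, and I expect the argument to be delicate only in verifying that no hidden scalars arise from a complex or quaternionic Schur structure on $\RR^{32}$; this is exactly why the argument works for $C_{9,1}$ but fails in the homogeneous cases of Proposition~\ref{P:Clifford-homog}. In those, either the Clifford representation decomposes as a sum of $k>1$ isomorphic irreducible modules (producing a nontrivial $\SO(k)$, $\SU(k)$ or $\Sp(k)$ centralizer), or the relevant positive Clifford algebra has complex or quaternionic type so that its Schur commutant is already strictly larger than $\RR\cdot\Id$—neither phenomenon occurs for $C_{9,1}$, since $k=1$ and $C\ell_{10,0}\cong M_{32}(\RR)$ is of real type.
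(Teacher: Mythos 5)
Your proof is correct, and it takes a genuinely different and arguably cleaner route than the paper's. The key observation you make—that any group whose orbits are exactly the fibers of $\pi_C$ must commute with every $P\in\RR_C$ (and hence with the full algebra $\mathcal{B}$ generated by the Clifford system), because $\pi_C(hx)=\pi_C(x)$ forces $h^{-1}Ph=P$ by polarization of symmetric matrices—reduces the question to computing the centralizer of $\mathcal{B}$ in $\SO(32)$. Since $C\ell_{10,0}\cong M_{32}(\RR)$ is central simple and of the same dimension as $\End(\RR^{32})$, the irreducible representation identifies $\mathcal{B}$ with $\End(\RR^{32})$, whose commutant is $\RR\cdot\Id$, so the putative group lands in $\{\pm\Id\}$ and cannot act transitively on positive-dimensional fibers. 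The paper instead argues geometrically: it first shows the principal isotropy $H$ is trivial by a reduction argument (passing to $\Fix(H)$, invoking the orbifold fundamental group of the hemisphere ${1\over 2}\sphere^{10}_+$, and applying the rigidity Proposition~\ref{P:rigidity} to recognize the reduced foliation as a Clifford foliation on a sphere of dimension at least $31$), then uses that a leaf $E^1_+(P_0)\cong\sphere^{15}$ is acted on transitively by the putative $G$ while $\dim G=21$, contradicting the classification of transitive actions on spheres. Your algebraic approach is more elementary, avoids the rigidity theorem and the Lytchak--Wilking input, and moreover makes transparent \emph{why} the cases $m=1,2,4$ in Proposition~\ref{P:Clifford-homog} behave differently: there the Schur commutant of the Clifford module is $\RR$, $\CC$, or $\HH$ (and $k>1$ adds an $M_k$ factor), so the centralizer in $\SO(2l)$ is $\SO(k)$, $\U(k)$, or $\Sp(k)$ rather than $\{\pm\Id\}$. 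The paper's route, though heavier, has the side benefit of exercising Proposition~\ref{P:rigidity}; both are valid.
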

\begin{proof}
Suppose $(\sphere^{31}, \fol_C)$ is homogeneous, given by the orbits of a group $G\In \SO(32)$.

First of all, we prove that the principal isotropy group $H$ must be trivial. If not, consider the subsphere $\sphere^h=\Fix(H)$, and take $G'=N(H)/H$, where $N(H)$ is the normalizer of $H$ in $G$. The identity component $G'_0$ of $G'$ acts on $\sphere^h$ with trivial principal groups and there is an orbifold cover $\sphere^h/G'_0\to \sphere^{31}/G$, where $\sphere^{31}/G$ is isometric to the hemisphere ${1\over 2}\sphere^{10}_+$. The quotient $\sphere^h/G'_0$ cannot be ${1\over 2}\sphere^{10}$ (the only spheres that can arise as such quotient must have dimension $2,4$ or $8$, see for example the introduction of \cite{LW}) and since $\pi_1^{orb}({1\over 2}\sphere^{10}_+)=\ZZ/2\ZZ$, it must be $\sphere^{h}/G'_0={1\over 2}\sphere^{10}_+$. By Proposition \ref{P:rigidity} it follows that $(\sphere^h,G'_0)$ is itself a Clifford foliation, with respect to some Clifford system $(Q_0,\ldots Q_9)$. In particular $h\geq 31$, and therefore it must be $\sphere^h=\sphere^{31}$ and $H=\{1\}$. 
\\

Since the leaf $E_+^1(P_0)$ is a totally geodesic sphere of dimension $15$, $G$ acts transitively on $\sphere^{15}$ by isometries. On the other hand, since the $G$- action has trivial principal isotropy groups, it must have $\dim G=21$, and this gives a contradiction since there are no groups of dimension 21 that act transitively on $\sphere^{15}$ (see for example \cite[Table C]{GWZ}).
\end{proof}

Finally, we determine the homogeneity of a big fraction of the composed foliations $\fol_0\circ \fol_C$, in terms of the homogeneity of $\fol_0$ and and $\fol_C$.

\begin{prop}\label{P:composed-homogeneous}
Let $C,\, \fol_C,\, \fol_C'$ be defined as in proposition \ref{homogeneous}, and let $(\sphere_C,\fol_0)$ be a singular Riemannian foliation. If the leaf space of $\fol_C$ is a hemisphere and the composed foliation $\fol_0\circ \fol_C$ is homogeneous, then $\fol_0$ and $\fol'_C$ are homogeneous. On the other hand, if $\fol_C$ and $\fol_0$ are homogeneous, so is $\fol_0\circ \fol_C$.
\end{prop}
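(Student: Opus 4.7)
The plan is to treat the two implications separately, with the spin representation $\eta:\Spin(m+1)\to\SO(2l)$ from Section \ref{SS:Symmetries-Clifford-Foliations} as the main technical tool in both directions.

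For the converse ($\fol_0,\fol_C$ homogeneous $\Rightarrow \fol_0\circ\fol_C$ homogeneous), I would argue as follows. Let $\fol_C$ be the orbit foliation of a connected $G_C\subseteq\SO(2l)$ and $\fol_0$ that of a connected $G_0\subseteq\SO(m+1)$. Lift $G_0$ through the double cover $\Spin(m+1)\to\SO(m+1)$ to $\tilde G_0\subseteq\Spin(m+1)$, and set $G:=\overline{\langle G_C,\eta(\tilde G_0)\rangle}\subseteq\SO(2l)$. The subgroup $\eta(\tilde G_0)$ descends under $\pi_C$ to the linear extension of the $G_0$-action on $\DD_C$, whose orbits are precisely the $\fol_0^h$-leaves, while $G_C$ acts transitively on each $\pi_C$-fiber. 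Combining these two facts,
\[
G\cdot x\;=\;\pi_C^{-1}\bigl(G_0\cdot\pi_C(x)\bigr)\;=\;\text{the }(\fol_0\circ\fol_C)\text{-leaf through }x.
\]

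For the forward direction, assume $\fol_0\circ\fol_C$ is the orbit foliation of a connected $G\subseteq\SO(2l)$. To show $\fol'_C$ is homogeneous, I would set $G':=\overline{\langle G,\eta(\Spin(m+1))\rangle}$; both $G$ (because $\fol_0\circ\fol_C$ refines $\fol'_C$) and $\eta(\Spin(m+1))$ preserve every $\fol'_C$-leaf set-wise, hence so does $G'$. The key observation is that for any $x,y$ in the same $\fol'_C$-leaf $\pi_C^{-1}(t\sphere_C)$, transitivity of $\eta(\Spin(m+1))$ on the distance sphere $t\sphere_C\subseteq\DD_C$ produces $g\in\eta(\Spin(m+1))$ with $\pi_C(g\cdot x)=\pi_C(y)$, so that $g\cdot x$ and $y$ lie in a common $\pi_C$-fiber, hence in a common $(\fol_0\circ\fol_C)$-leaf, giving $y\in G\cdot g\cdot x\subseteq G'\cdot x$. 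Thus $G'$-orbits coincide with the $\fol'_C$-leaves.

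For the homogeneity of $\fol_0$, I would use a slice representation argument at a point $x_0\in M_+=\pi_C^{-1}(0)$. Since $M_+$ is a $(\fol_0\circ\fol_C)$-leaf it equals $G\cdot x_0$, and the isotropy $H:=G_{x_0}$ acts linearly and orthogonally on $\nu_{x_0}M_+$. Because $\pi_C$ is a submetry collapsing $M_+$ to the origin, its differential $d\pi_C|_{\nu_{x_0}M_+}:\nu_{x_0}M_+\to T_0\DD_C=\RR_C$ is a linear isometric isomorphism, and in particular identifies $\sphere(\nu_{x_0}M_+)$ with $\sphere_C$. By the slice theorem, $G$-orbits in a tube around $M_+$ are modeled by $H$-orbits on a transverse disk in $\nu_{x_0}M_+$, which go under $\pi_C$ bijectively to the $\fol_0^h$-leaves in a small ball around $0\in\DD_C$; restricting to the unit normal sphere, the $H$-orbit foliation on $\sphere(\nu_{x_0}M_+)$ is identified with $(\sphere_C,\fol_0)$, exhibiting $\fol_0$ as homogeneous. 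The most delicate step is this last slice argument: one must verify rigorously that $d\pi_C|_{\nu_{x_0}M_+}$ really is an isometric linear isomorphism and that, through the resulting identification $\sphere(\nu_{x_0}M_+)\cong\sphere_C$, the orbit decomposition of the slice representation matches $\fol_0$, which relies on the slice theorem for isometric actions together with the cone description of $\fol_0^h$ forcing its intersection with small spheres around $0$ to recover $\fol_0$.
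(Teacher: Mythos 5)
Your proposal follows the same architecture as the paper's proof: the converse direction via lifting the $\fol_0$-group through $\Spin(m+1)\to\SO(m+1)$, composing with the spin representation $\eta$, and combining with a group giving $\fol_C$; the homogeneity of $\fol'_C$ via the group generated by $G$ and $\eta(\Spin(m+1))$ exactly as in Proposition \ref{homogeneous}; and the homogeneity of $\fol_0$ via the slice representation at a point of $M_+$, using the identification $\nu^1_{x_0}M_+\cong\sphere_C$ induced by $\pi_C\circ\exp^{\perp}_{x_0}$. The only difference is cosmetic: you spell out more explicitly why the $G'$-orbits fill the $\fol'_C$-leaves and flag the isometry $d\pi_C|_{\nu_{x_0}M_+}\cong\RR_C$ as the point needing care, whereas the paper simply asserts these identifications, but the content is the same.
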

\begin{proof}
Suppose first that $(\sphere^{2l-1},\fol_0\circ \fol_C)$ is homogeneous, given by the orbits of a $G$-action. Remember that $M_+$ is a leaf for both $\fol_C$ and $\fol_0\circ\fol_C$. For any point $x\in M_+$, the unit normal sphere of $M_+$ at $x$, $\nu_x^1M_+$, is diffeomorphic to $\sphere^m$ via $\pi_C\circ \exp_x^{\perp}$. Moreover (the identity component of) the isotropy group $G_x$ acts on $\nu_x^1M_+$ via the slice representation, whose orbits get mapped to the leaves of $\fol_0$ via the same map $\pi_C\circ \exp_x^{\perp}$ and therefore $\fol_0$ is homogeneous as well. Moreover, as in Proposition \ref{homogeneous} above, we can consider the group $G'\In \SO(2l)$ generated by $G$ and $\eta(\Spin(m+1))$, and the orbits of $G'$ are, once again, the leaves of $\fol_C'$, which is then homogeneous.
\\

Suppose now that $(\sphere^m, \fol_0)$ is homogeneous and it is given by the orbits of a representation $\rho:H\to\SO(m+1)$. Up to a double cover $H'\to H$ we can lift $\rho$ to $\rho':H'\to \Spin(m+1)$, and via the embedding $\eta:\Spin(m+1)\to \SO(2l)$ defined in Section \ref{SS:Symmetries-Clifford-Foliations} we have a representation $\rho'':H'\to \SO(2l)$. By the way we defined $\eta$ it is clear that the $\rho''(H')$-orbits on $\sphere^{2l-1}$ get projected, via $\pi_C$, to $\rho(H)$-orbits on $\DD_C$. In particular, if $\fol_C$ is homogeneous given by some $K$-action, the (closure of the) group $K'\In \SO(2l)$ generated by $K$ and $\rho''(H')$ acts on $\sphere^{2l-1}$ isometrically and the orbits are precisely the leaves of $\fol_0\circ \fol_C$.
\end{proof}
\begin{cor}\label{C:homog}
If  $(\sphere^{2l-1},\fol_C)$ is a Clifford foliation with quotient $\sphere^2$ or $\sphere^4$, then for every singular Riemannian foliation $(\sphere_C,\fol_0)$ the composed foliation $\fol_0\circ\fol_C$ is homogeneous.
\end{cor}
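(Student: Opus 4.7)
The plan is to reduce the corollary to the second assertion of Proposition \ref{P:composed-homogeneous}, namely that $\fol_0 \circ \fol_C$ is homogeneous whenever both $\fol_C$ and $\fol_0$ are. I need to verify two things: (a) that $\fol_C$ itself is homogeneous when its quotient is $\sphere^2$ or $\sphere^4$, and (b) that every singular Riemannian foliation $\fol_0$ on such a sphere is homogeneous.

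For (a), the corollary following Proposition \ref{FC-transnormal} identifies $\pi_C$ with a Hopf fibration whenever $\sphere_C=\sphere^m$ with $m=2$ or $4$. The complex Hopf fibration $\sphere^3\to \sphere^2$ and the quaternionic Hopf fibration $\sphere^7\to \sphere^4$ are homogeneous, with fibers realized as orbits of $\U(1)\subset \SO(4)$ and of $\Sp(1)\subset \SO(8)$ respectively. (This is the same case-by-case one finds inside the Clifford-system construction of Proposition \ref{P:Clifford-homog}, specialised to $l=m$.)

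For (b), I would dispose of the codimension by inspection. On $\sphere^2$ the leaf space is $0$-, $1$- or $2$-dimensional; the extremal cases are the trivial foliations, and a codimension-$1$ foliation on $\sphere^2$ is isoparametric with $g\le 2$ principal curvatures, hence homogeneous by Cartan. On $\sphere^4$ the analysis splits by codimension: the trivial cases are clear; codimension $1$ gives an isoparametric hypersurface in $\sphere^4$ with at most $3$ principal curvatures, homogeneous by Cartan; regular foliations of codimensions $2$ and $3$ are homogeneous by Lytchak--Wilking \cite{LW}, since the only non-homogeneous regular foliation of a sphere is the Hopf fibration $\sphere^{15}\to \sphere^8$. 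The remaining case is a genuinely singular codimension-$2$ or -$3$ foliation on $\sphere^4$: here the quotient is a $2$- or $3$-dimensional Alexandrov space of curvature $\ge 1$, and one combines the slice representation along a codimension-$1$ stratum (which, by Cartan again, forces the slice action to be homogeneous) with the infinitesimal classification of isoparametric foliations in low dimensions to promote local homogeneity to a global isometric group action.

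Given (a) and (b), the corollary now follows from the construction in the proof of Proposition \ref{P:composed-homogeneous}: if $K\subset \SO(2l)$ realises $\fol_C$ and $\rho:H\to \SO(m+1)$ realises $\fol_0$, one lifts $\rho$ through a double cover $H'\to H$ to $\rho':H'\to \Spin(m+1)$ and composes with $\eta:\Spin(m+1)\to \SO(2l)$ from Section \ref{SS:Symmetries-Clifford-Foliations} to obtain $\rho'':H'\to \SO(2l)$; the closure of the subgroup generated by $K$ and $\rho''(H')$ acts on $\sphere^{2l-1}$ with orbits the leaves of $\fol_0\circ \fol_C$.

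The main obstacle is step (b) on $\sphere^4$: for isoparametric (codimension $1$) or regular foliations the statement is classical, but a singular codimension-$2$ or -$3$ foliation on $\sphere^4$ is not covered by a single off-the-shelf reference. I expect this to be handled by reducing to the behaviour of $\fol_0$ on the codimension-$1$ strata (where homogeneity is forced by Cartan) and then globalising via the isometries of the $2$-dimensional spherical-orbifold quotient; nothing subtler than cohomogeneity-$2$ classifications on $\sphere^4$ should be required, but this is the step that needs the most care.
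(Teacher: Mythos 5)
Your reduction is exactly the paper's: show $\fol_C$ is homogeneous, show every singular Riemannian foliation $\fol_0$ on $\sphere^2$ or $\sphere^4$ is homogeneous, and then invoke the second half of Proposition \ref{P:composed-homogeneous}. Step (a) is fine and matches the paper. The divergence, and the gap, is in step (b).

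The paper disposes of (b) in one line by citing \cite{Rad} (the author's PhD thesis, ``Low dimensional singular Riemannian foliations on spheres''), which contains the classification showing that every singular Riemannian foliation on a round sphere with leaves of dimension $\le 3$ is homogeneous; together with the trivial one-leaf foliation on $\sphere^4$, this exhausts all foliations on $\sphere^2$ and $\sphere^4$. You instead try to rebuild this fact by hand. Your case analysis correctly handles the trivial, codimension-$1$ (Cartan), and regular (Lytchak--Wilking) cases, but the remaining case --- a genuinely singular foliation of codimension $2$ or $3$ on $\sphere^4$ --- is where the real content lives, and your sketch (``combine the slice representation along a codimension-$1$ stratum with the infinitesimal classification\ldots to promote local homogeneity to a global isometric group action'') is not a proof. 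Promoting slice-wise homogeneity to global homogeneity is precisely the nontrivial step in this kind of classification: one has to rule out exotic gluings across strata and show the local groups patch into a single $G\subset\SO(5)$, and no argument for that is given. You flag this yourself as ``the step that needs the most care,'' which is the right instinct: as written the proposal does not establish (b), and without it the corollary is not proved. The fix is simply to quote the low-dimensional classification from \cite{Rad} rather than re-derive it.

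One smaller inaccuracy: in (a) you write that the Hopf fibrations $\sphere^3\to\sphere^2$ and $\sphere^7\to\sphere^4$ have fibers given by $\U(1)$- and $\Sp(1)$-orbits, but $\fol_C$ here lives on $\sphere^{2l-1}$ with $l=m=2$ or $4$, i.e.\ on $\sphere^3$ and $\sphere^7$, so this is internally consistent; however, it is cleaner (and what the paper does) to just cite Proposition \ref{P:Clifford-homog}, which covers these cases uniformly via the diagonal $\SU(k)$ and $\Sp(k)$ actions with $k=1$.
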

\begin{proof} In this case $\fol_0$ is a singular Riemannian foliation on $\sphere^2$ or $\sphere^4$, and therefore either $\dim\fol_0\leq 3$ or $\fol_0$ is the trivial foliation in $\sphere^4$ consisting of one leaf. In the first case, $\fol_0$ is homogeneous by \cite{Rad}, in the second it is trivially homogeneous. Since $\fol_C$ itself is homogeneous, $\fol_0\circ\fol_C$ is homogeneous by Proposition \ref{P:composed-homogeneous}.
\end{proof}

The results of this sections allow us to prove the second part of Theorem \ref{T:ComposedFoliations}
\begin{prop}\label{P:homog-composed}
Let $C$ be a Clifford system on $\RR^{2l}$ and $(\sphere_C,\fol_0)$ a singular Riemannian foliation. If $C\neq C_{8,1},C_{9,1}$ then $(\sphere^{2l-1},\fol_0\circ\fol_C)$ is homogeneous if and only if $\fol_0$ and $\fol_C$ are homogeneous. If $C=C_{9,1}$ then  $(\sphere^{2l-1},\fol_0\circ\fol_C)$ is homogeneous only if $\fol_0$ is homogeneous.
\end{prop}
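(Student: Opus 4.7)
The plan is to deduce this proposition from Proposition~\ref{P:composed-homogeneous}, Proposition~\ref{P:Clifford-homog}, Corollary~\ref{C:homog}, and the FKM homogeneity table~\eqref{Table homog}, splitting by whether $\fol_C$ has sphere quotient ($l=m$) or hemisphere quotient ($l>m+1$). The sphere-quotient Clifford systems are $C_{2,1},C_{4,1},C_{8,1}$; the first two are trivial for the proposition, because by Corollary~\ref{C:homog} the composed foliation is always homogeneous in these cases and by \cite{Rad} $\fol_0$ is automatically homogeneous on $\sphere^2$ or $\sphere^4$ (while $\fol_C$ is the homogeneous Hopf fibration $\sphere^3\to\sphere^2$ or $\sphere^7\to\sphere^4$), so both sides of the biconditional are identically true; the remaining sphere case, $C_{8,1}$, is excluded by hypothesis.

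The content lies in the hemisphere case $l>m+1$. The ``if'' direction is precisely the second half of Proposition~\ref{P:composed-homogeneous}. For the ``only if'' direction, assume $\fol_0\circ\fol_C$ is the orbit foliation of a group $G\subseteq\SO(2l)$. The first half of Proposition~\ref{P:composed-homogeneous} gives immediately that $\fol_0$ is homogeneous and that the FKM foliation $\fol'_C$ is homogeneous; this already establishes the $C=C_{9,1}$ statement without any further work. To upgrade to homogeneity of $\fol_C$ itself when $C\neq C_{9,1}$, I would compare Table~\eqref{Table homog} (which lists the $(m,k)$ producing homogeneous $\fol'_C$ as $(1,k\geq 2)$, $(2,k)$, $(4,k)$ with $P_0\cdots P_4=\pm Id$, and $(9,1)$) with Proposition~\ref{P:Clifford-homog} (which makes $\fol_C$ homogeneous in the first three families). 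The only discrepancy is $C_{9,1}$, which is excluded by hypothesis, so $\fol_C$ is homogeneous in the remaining cases.

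The main obstacle is structural rather than computational: one must recognise that homogeneity of $\fol'_C$ does not in general imply homogeneity of $\fol_C$, and that the unique obstruction is $C_{9,1}$ in the hemisphere regime (with $C_{8,1}$ playing the analogous role in the sphere regime via the non-homogeneous Hopf fibration $\sphere^{15}\to\sphere^8$). Once these two exceptions are identified, the proposition follows by assembling the results already established in Section~\ref{s:homogeneous}.
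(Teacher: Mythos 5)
Your proposal is correct and follows essentially the same route as the paper's proof: split into the sphere-quotient case ($C_{2,1},C_{4,1}$, handled by Proposition~\ref{P:Clifford-homog} and Corollary~\ref{C:homog}) and the hemisphere case ($l>m+1$, where Proposition~\ref{P:composed-homogeneous} gives the easy direction and, for the converse, homogeneity of $\fol_0$ and $\fol'_C$, which is then upgraded to homogeneity of $\fol_C$ by comparing Table~\eqref{Table homog} with Proposition~\ref{P:Clifford-homog} and noting $C_{9,1}$ as the lone exception). Your explicit observation that homogeneity of $\fol'_C$ need not imply homogeneity of $\fol_C$, with $C_{9,1}$ (resp.\ $C_{8,1}$) as the hemisphere (resp.\ sphere) obstruction, is exactly the structural point the paper's terser proof relies on.
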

\begin{proof}
If $\fol_C$ and $\fol_0$ are homogeneous, then $\fol_0\circ \fol_C$ is homogeneous by \ref{P:composed-homogeneous}. If $\fol_0\circ \fol_C$ is homogeneous then there are two cases to consider:
\begin{itemize}
\item If the leaf space of $\fol_C$ is $\sphere_C$, then $C=C_{2,1}$ or $C_{4,1}$. In both cases $\fol_C$ is homogeneous by Proposition \ref{P:Clifford-homog}, and $\fol_0$ is homogeneous by Corollary \ref{C:homog}.
\item If the leaf space of $\fol_C$ is $\DD_C$, then by Proposition \ref{P:composed-homogeneous} both $\fol_C,\,\fol'_C$ are homogeneous. Moreover, if $C\neq C_{9,1}$ then $\fol_C$ is homogeneous as well by Table \eqref{Table homog} and Proposition \ref{P:Clifford-homog}.
\end{itemize}

\end{proof}
%The information is collected in Table \ref{composed homogeneous}. When a ``?''  appears in the table, the homogeneity is not known. When a ``-'' appears, there are no foliations of that type.
%\begin{table}[htb]

%\caption{\label{composed homogeneous} Homogeneity of a composed foliation $\fol_0\circ \fol_C$. }
%\begin{tabular}{|c|c||c|c|}\hline
%\multicolumn{2}{|c|}{Is $\fol_0\circ \fol_C$ homogeneous?}&$\fol_0$ homog.&$\fol_0$ non homog.\\ \hline\hline
%\multicolumn{1}{|c|}{\multirow{3}{*}{\shortstack{$l>m+1$\\ \phantom{a}\\ quotient\\ $\DD_C$}}}&$\fol_C, \fol_C'$ homog.& yes&no\\ \cline{2-4}
%&$\fol_C$ hom., $\fol'_C$ non homog.& ?&no\\ \cline{2-4}
%&$\fol'_C$ non homog.&no&no\\\hline\hline
%\multicolumn{1}{|c|}{\multirow{2}{*}{\shortstack{$l=m$\\ \\ quotient\\ $\sphere^m(4)$}}}&$m=2,4$&yes& -\\ \cline{2-4}
%&$m=8$& ?&?\\ \hline
%\end{tabular}
%\end{table}
%
% PROPERTIES
%

\section{Clifford foliations on compact rank 1 symmetric spaces}\label{S:CROSS}

The construction of Clifford and composed foliations can be used to produce new foliations on the other simply connected, compact, rank one symmetric spaces.

\subsection{Complex projective spaces}
Let $C$ be a Clifford algebra of rank $m+1$ on $\RR^{2l}$ with $m\geq 1$ and let $(\sphere^{2l-1},\fol_C)$ be the associated Clifford foliation. If we define $i=P_0P_1\in \mathfrak{so}(2l)$, the flow of $i$ defines an isometric $\sphere^1$ action  on $\sphere^{2l-1}$. This action preserves $\fol_C$, and thus it induces an isometric action on the quotient ${1\over 2}\sphere_C$ or $\DD_C={1\over2}(\sphere_C\star\{pt\})$ that acts transitively on the circle containing $P_0, P_1$ while fixing the other elements $P_2,\ldots P_m$. Therefore, the foliation $(\sphere^{2l-1},\fol_C)$ projects to a foliation $\fol_C^{\CC}$ on $\sphere^{2l-1}/\sphere^1=\CC\PP^{l-1}$ with quotient isometric to either ${1\over 2}(\sphere^{m-2}\star \{pt\})$ (a hemisphere of ${1\over 2}\sphere^{m-1}$) or ${1\over 2}(\sphere^{m-2}\star [0,\pi/2])$ (a half hemisphere of ${1\over 2}\sphere^{m}$).
As in the spherical case, given a singular Riemannian foliation $\fol_0$ on $\sphere^{m-2}$ we can define new foliations $(\CC\PP^{l-1},\fol_0\circ\fol_C^{\CC})$.

\subsection{Quaternionic projective spaces}
The case of $\HH\PP^n$ closely follows the construction on $\CC\PP^n$. Let $C$ be a Clifford algebra of rank $m+1$ on $\RR^{2l}$ with $m\geq 1$ and let $(\sphere^{2l-1},\fol_C)$ be the associated Clifford foliation. The Lie algebra generated by $\{i=P_0P_1,\,j=P_1P_2,\,k=P_0P_2\}\In \mathfrak{so}(2l)$ corresponds to a subgroup $\sphere^3\In \SO(2l)$ which acts on $\sphere^{2l-1}$. This action preserves $\fol_C$, and it induces an isometric action on the quotient ${1\over 2}\sphere^m$ or $\DD_C={1\over2}(\sphere^m\star\{pt\})$, that acts transitively on the $2$-sphere containing $P_0,P_1,P_2$ while fixing the other elements $P_3,\ldots P_m$. Therefore, $\fol_C$ projects to a foliation $\fol_C^{\HH}$ on $\sphere^{2l-1}/\sphere^{3}=\HH\PP^{l/2-1}$ whose quotient is isometric to either ${1\over 2}(\sphere^{m-3}\star \{pt\})$ (a hemisphere of ${1\over 2}\sphere^{m-2}$) or ${1\over 2}(\sphere^{m-3}\star [0,\pi/2])$ (a half hemisphere of ${1\over 2}\sphere^{m-1}$). Given a singular Riemannian foliation $\fol_0$ on $\sphere^{m-3}$ we can define new foliations $(\HH\PP^{l/2-1},\fol_0\circ\fol_C^{\HH})$.

\subsection{Cayley projective space}
Let $C'=C_{m,k}$ be a Clifford system on $\RR^{16}$ with $m\in\{3,5,6\}$. Since $m$ does not divide 4, the algebraic equivalence class of $C'$ is uniquely determined by $m$ and therefore $C$ is equivalent to the subsystem $(P_0,\ldots P_m)$ of $C_{8,1}=(P_0,\ldots P_8)$. In particular, the projection $\pi_{C'}:\sphere^{15}\to \DD_{C'}$ factors through ${1\over 2}\sphere_{C_{8,1}}={1\over 2}\sphere^8$ and the leaves of $\fol_{C'}$ contain the fibers of the Hopf fibration $\sphere^{15}\to {1\over 2}\sphere^8$. This means in particular that $(\sphere^{15}, \fol_{C'})$ is obtained by pulling back a foliation on ${1\over 2}\sphere^8$ via the Hopf map and the same can be said about any composed foliation $(\sphere^{15}, \fol_0\circ \fol_{C'})$, for every singular foliation $\fol_0$ on $\sphere_{C'}$. By shrinking the fibers of the Hopf fibration to a factor $t\in (0,1)$, we get a family of metrics $g_t$ on $\sphere^{15}$ for which $(\sphere^{15},g_t)\to {1\over 2}\sphere^8$ is still a Riemannian submersion, and in particular $\fol_{C'}$ is still a singular Riemannian foliation on $(\sphere^{15},g_t)$.

Consider now the Cayley plane $Ca\PP^2$ with its canonical metric. Fixing a point $p_0$, the cut locus of $p_0$ is the sphere of distance $\pi/2$. Moreover, for $r<\pi/2$ the distance sphere of radius $r$ around $p_0$ is isometric to $(\sphere^{15},g_{\cos r})$ while the distance sphere of radius $\pi/2$ is isometric to ${1\over 2}\sphere^8$. Given a singular Riemannian foliation $\fol_0\circ \fol_C$ on $\sphere^{15}=T^1_{p_0}Ca\PP^2$, we induce a foliation $\fol$ on $Ca\PP^2$ by exponentiating the leaves. In other words, we define the leaf through $q=\exp_{p_0}rv$, $\|v\|=1$, as
\[
L_{q}=\{q'=\exp_{p_0}rv'\mid v'\in L_v\}.
\]
Clearly the restriction of $\fol$ to every distance sphere around $p_0$ is a singular Riemannian foliation. Moreover, the set of regular leaves is open and dense, and it is easy to check that the foliation around each regular leaf is defined by the fibers of a Riemannian submersion. In particular, the restriction of $\fol$ to the regular set is a singular Riemannian foliation. Since every singular leaf is a limit of regular leaves, one deduces that singular leaves as well stay at a constant distance from each other, and therefore $\fol$ defines a transnormal system on $Ca\PP^2$. Finally, similarly to Proposition \ref{P:SRF} we can conclude that $\fol$ is, in fact, a singular Riemannian foliation on $Ca\PP^2$. This foliation cannot be homogeneous, because on the unit sphere around $p_0$ it is given by $(\sphere^{15},\fol_0\circ\fol_C)$ which is not homogeneous by Proposition \ref{P:homog-composed}.

\section{Properties of Clifford and composed foliations}\label{s:properties}
The new examples exhibit some behaviours that either do not appear in the homogeneous case, or have not been shown to appear. We collect here a few of these new behaviours.

\subsection{Orbifold quotient}

Recall that a singular Riemannian foliation $(M,\fol)$ is called polar if, for every point $p\in M$, there is a totally geodesic submanifold of dimension equal to the codimension of $\fol$ that passes through $p$ and is perpendicular to all the leaves it meets. The quotient of a closed polar foliation $(\sphere^n,\fol)$ has constant curvature 1. If $(\sphere^{2l-1},\fol_C)$ is a Clifford foliation with hemispherical quotient and $(\sphere_C,\fol_0)$ is a polar foliation, then the quotient of $\fol_0\circ \fol_C$ is isometric to an orbifold of curvature $4$. In particular there is a large variety of non polar singular Riemannian foliations whose leaf space is an orbifold of constant curvature $4$, of any dimension. This should be compared with a recent result of C. Gorodski and A. Lytchak \cite{GL3}, who show that if the quotient $X$ of a non-polar homogeneous foliation on a sphere is isometric to an orbifold, then it is either a weigthed projective space (complex or quaternionic) or it is a good orbifold of curvature $4$ and dimension lower than $5$. In particular, there is only a finite number of orbifolds of curvature 4 what arise as quotients of homogeneous foliations.

\subsection{Strata on the leaf space}

Let $(\sphere^n, \fol)$ be a homogeneous foliation, induced by the action of a compact group $G\In \SO(n+1)$. If $\Sigma$ is a minimal stratum of $\fol$ and $x\in \Sigma$ is a singular point with isotropy group $G_x$, the connected component of $\Fix(G_x)$ through $x$ is a totally geodesic sphere $\sphere^k\In \sphere^n$ that projects via $\pi:\sphere^n\to \sphere^n/G$ to the minimal stratum $\pi(\Sigma)$ of $\sphere^n/\fol$ containing $\pi(x)$. Moreover, the group $G'=N(G_x)/G_x$ acts effectively on $\sphere^k$ by isometries, and there is a map $\sphere^k/G'\to \pi(\Sigma)$. If we let $G'_0$ be the identity component of $G'$, $G'_0$ induces a homogeneous singular Riemannian foliation $\fol'$ on $\sphere^k$, and the stratum $\pi(\Sigma)$ is the quotient of the singular Riemannian foliation $(\sphere^k,\fol')$ (cf. \cite{GL3} where this fact is stated in greater generality).

This is no longer true in the case of (non homogeneous) Clifford foliations. In fact given a Clifford foliation $(\sphere^{2l-1},\fol_C)$, the only singular stratum in the quotient is $\sphere_C\simeq \sphere^m$, which in particular is also minimal but it cannot be the quotient of any singular Riemannian foliation $(\sphere^k, \fol')$, unless $m=2, 4, 8$.

%\subsection{Iterated foliations}
%Fix an integer $m_0$ and consider the sequence of integers $m_0, m_1,\ldots m_n$ such that $m_{j}=2\delta(m_{j-1})-1$, $j=1,\ldots n$. For each $m_j$, $j=0,\ldots n$, consider a Clifford system $C_j$ of rank $m_j+1$ on $\RR^{2\delta(m_j)}=\RR^{m_{j+1}+1}$ and the corresponding Clifford foliation $\fol_{C_j}$ on $\sphere^{2\delta(m_j)-1}=\sphere^{m_{j+1}}$. By the construction of the integers $m_j$, it makes sense to define the iterated composed foliations
%\[
%\fol_k=\big((\fol_{C_1}\circ \fol_{C_2})\circ\ldots\big) \circ \fol_{C_k}
%\]
%on $\sphere^{m_{j+1}}$. We want to study the curvature of the quotients $X_j=\sphere^{m_{j+1}}/\fol_j$ inductively, using the observation that $X_{j+1}={1\over 2}(X_j\star\{pt\})$. The first foliation $\fol_1=\fol_{C_1}$ is just a Clifford foliation, and thus $X_1$ has constant curvature $4$. The spherical join $X_1\star \{pt\}$ contains $X_1$ as a totally geodesic subset, and thus the supremum of the curvatures of $X_1\star \{pt\}$ is at least $4$. Since $X_2={1\over 2}(X_1\star \{pt\})$ and dividing the dimension by 2 increases the curvature by $4$, the supremum of the curvatures of $X_2$ is at least $4^2$. In the same way, if the supremum of the curvatures of $X_j$ is at least $c$ then the supremum of the curvatures of $X_{j+1}={1\over 2}(X_j\star \{pt\})$ is at least $4c$. By induction, the supremum of the curvature of $X_k$ is $\geq 4^k$, and therefore the sequence of singular Riemannian foliations $(\sphere^{m_{j+1}},\fol_j)$ admits quotients whose maximum curvature grows unbounded.
\subsection{Highly curved quotients} Given a composed foliation $(\sphere^{2l-1},\fol_0\circ \fol_C)$ where $\fol_0$ is not polar, the quotient $\sphere^{2l-1}/\fol_0\circ \fol_C$ is an Alexandrov space of curvature $\geq 2$, but not with constant curvature. Some of these examples are homogeneous, but it is not known whether there are other homogeneous examples with these curvature properties.

\subsection{Isometric quotients}
It is not hard to produce non congruent homogeneous foliations $(\sphere^n,\fol)$, $(\sphere^{n'},\fol')$ with isometric quotients. Such foliations have been recently been extensively studied by Gorodski and Lytchak in \cite{GL1, GL2}. However, to the best the author's knowledge there are no known examples of non congruent homogeneous foliations $(\sphere^n,\fol)$, $(\sphere^{n'},\fol')$ that admit an isometry $I:\sphere^n/\fol\to\sphere^{n'}/\fol'$  \emph{which preserves the dimension of the leaves}.

It was shown in  \cite{AR} that if two singular Riemannian foliations $(M,\fol)$, $(M',\fol')$ admit an isometry $I:M/\fol\to M'/\fol'$ which preserves the dimension of the leaves, then the two foliations admit isomorphic sheaves of smooth basic functions (i.e. smooth functions that are constant along the leaves). In the case $M=M'=\sphere^n$ it was hard to come up with non congruent examples with this property.

By leaving the realm of homogeneous foliations and using composed foliations, however, we can produce arbitrary numbers of pairwise non congruent foliations $(\sphere^{n_i},\fol^i)$ all of whose quotients are isometric, and the corresponding leaves have the same dimension.
In fact, fixing an integer $r$, consider $r$ geometrically inequivalent Clifford systems $C^{(i)}=\left(P^{(i)}_0,\ldots, P^{(i)}_m\right)$ on $\RR^{2l}$, with $i=1,\ldots, r$. By the classification of Clifford systems, such $C^{(i)}$ exist if $m$ is a multiple of $4$ and $l=k\delta(m)$ for some $k\geq 2r+2$. By Proposition \ref{P:distinguish} the foliations $\fol_{C^{(i)}}$ on $\sphere^{2\delta(m)-1}$ are not congruent but the quotients $\sphere^{2\delta(m)-1}/\fol_{C^{(i)}}$, $i=1,\ldots, r$ are all isometric to each other, with corresponding leaves of the same dimension. Given a singular Riemannian foliation $(\sphere^{m},\fol_0)$, the foliations $\fol_0\circ \fol_{C^{(i)}}$ are also not congruent, but again the quotients $\sphere^{2\delta(m)-1}/(\fol_0\circ\fol_{C^{(i)}})$ are all isometric to each other, 
with corresponding leaves of the same dimension.

%This behaviour contrast with the homogeneous case, since to the best of the author's knowledge there are no known examples of non-orbit equivalent representations whose orbit spaces admit an isometry that preserves the codimension of the orbits (cf. \cite[Question 1.2]{GL1}). By \cite{AR}, the sheaf of smooth $\fol_0\circ\fol_C$-basic functions (i.e. smooth functions that are constant along the leaves of $\fol_0\circ\fol_C$) is isomorphic to the sheaf of smooth $\fol_0\circ\fol_{C'}$-basic functions.

% BIBLIOGRAPHY

\bibliographystyle{amsplain}

\end{document}